\author{Liran Shaul}
\address{Department of Algebra, Faculty of Mathematics and Physics, Charles University in Prague, Sokolovsk\'a 83, 186 75 Praha, Czech Republic}
\email{shaul@karlin.mff.cuni.cz}
\newtheorem{thm}[equation]{Theorem}
\newtheorem*{thm*}{Theorem}
\newtheorem*{cor*}{Corollary}
\newtheorem{cor}[equation]{Corollary}
\newtheorem{prop}[equation]{Proposition}
\theoremstyle{definition}
\newtheorem{dfn}[equation]{Definition}
\newtheorem{rem}[equation]{Remark}
\newcommand{\opn}{\operatorname}
\newcommand{\cat}[1]{\operatorname{\mathsf{#1}}}
\newcommand{\mfrak}[1]{\mathfrak{#1}}
\newcommand{\mrm}[1]{\mathrm{#1}}
\renewcommand{\k}{\Bbbk}
\newcommand{\m}{\mfrak{m}}
\newcommand{\p}{\mfrak{p}}
\newcommand{\q}{\mfrak{q}}
\newcommand{\amp}{\operatorname{amp}}
\begin{document}

\title{Smooth flat maps over commutative DG-rings}

\begin{abstract}
We study smooth maps that arise in derived algebraic geometry.
Given a map $A \to B$ between non-positive commutative noetherian DG-rings which is of flat dimension $0$,
we show that it is smooth in the sense of To\"{e}n-Vezzosi if and only if it is homologically smooth in the sense of Kontsevich.
We then show that $B$, 
being a perfect DG-module over $B\otimes^{\mrm{L}}_A B$ has, locally, an explicit semi-free resolution as a Koszul complex.
As an application we show that a strong form of Van den Bergh duality between (derived) Hochschild homology and cohomology holds in this setting.
\end{abstract}

\thanks{{\em Mathematics Subject Classification} 2010:
16E45, 13D09, 14B25}

\setcounter{tocdepth}{1}
\setcounter{section}{-1}

\maketitle

\numberwithin{equation}{section}

\section{Introduction}

The aim of this paper is to study smooth maps that arise in derived algebraic geometry.
Since smoothness is a local condition, this study is affine in nature.
In classical commutative algebra, 
smooth maps between commutative noetherian rings are flat.
To generalize this situation, 
we will study here maps between commutative non-positive DG-rings which satisfy both a flatness condition,
as well as a smoothness condition.
There are two prominent smoothness conditions in the literature for maps between commutative DG-rings.
The first one is the notion of homological smoothness due to Kontsevich (see \cite{KS}).
Another notion, particularly important in derived algebraic geometry,
is the notion of smoothness in the sense of To\"{e}n-Vezzosi (\cite[Section 2.2.2]{TV}).
As the main result of this paper, we prove:

\begin{thm*}
Let $\varphi:A \to B$ be a flat\footnote{Here, by flat we mean a map of flat dimension $0$. See Section \ref{sec:fdz} for the precise definition.} map between commutative non-positive DG-rings,
such that the rings $\mrm{H}^0(A)$ and $\mrm{H}^0(B)$ are noetherian,
and the induced map $\mrm{H}^0(\varphi):\mrm{H}^0(A) \to \mrm{H}^0(B)$ is essentially of finite type.
Then the following are equivalent:
\begin{enumerate}
\item The map $\varphi$ is homologically smooth; i.e, the DG-ring $B$ is perfect over $B\otimes^{\mrm{L}}_A B$.
\item The map $\varphi$ is smooth in the sense of To\"{e}n-Vezzosi; i.e, $\varphi$ is flat and $\mrm{H}^0(\varphi)$ is smooth.
\item The diagonal map $\Delta:B\otimes^{\mrm{L}}_A B \to B$ is a local complete intersection:
for any prime ideal $\bar{\q} \in \opn{Spec}(\mrm{H}^0(B))$,
letting $\bar{\p} = \mrm{H}^0(\Delta)^{-1}(\bar{\q}) \in \opn{Spec}(\mrm{H}^0(B\otimes^{\mrm{L}}_A B))$,
there exist $\bar{a}_1,\dots,\bar{a}_n \in \mrm{H}^0\left((B\otimes^{\mrm{L}}_A B)_{\bar{\p}}\right)$ 
such that there is\footnote{To be precise, we show that the fact that a specific map being a quasi-isomorphism is equivalent to the other conditions. See Corollary \ref{cor:lci}.} a quasi-isomorphism
\[
B_{\bar{\q}} \to K((B\otimes^{\mrm{L}}_A B)_{\bar{\p}};\bar{a}_1,\dots,\bar{a}_n)
\]
where the right hand side is a Koszul complex over $(B\otimes^{\mrm{L}}_A B)_{\bar{\p}}$.
\end{enumerate}
\end{thm*}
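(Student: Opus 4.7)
The strategy is to establish the cycle $(2) \Rightarrow (3) \Rightarrow (1) \Rightarrow (2)$, with the Koszul construction in $(2) \Rightarrow (3)$ serving as the explicit new input and $(1) \Rightarrow (2)$ being the conceptually hardest step.

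The implication $(3) \Rightarrow (1)$ is essentially formal. The Koszul DG-algebra $K((B\otimes^{\mrm{L}}_A B)_{\bar{\p}};\bar{a}_1,\dots,\bar{a}_n)$ is obtained from $(B\otimes^{\mrm{L}}_A B)_{\bar{\p}}$ by adjoining finitely many exterior generators in degree $-1$, so it is a bounded, finitely generated semi-free---hence perfect---DG-module over $(B\otimes^{\mrm{L}}_A B)_{\bar{\p}}$. Since perfectness is a local property on $\opn{Spec}(\mrm{H}^0(B\otimes^{\mrm{L}}_A B))$, a local Koszul resolution of $B$ implies global perfectness of $B$ over $B\otimes^{\mrm{L}}_A B$.

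For $(2) \Rightarrow (3)$, I would first use the flatness of $\varphi$, combined with the material of Section~\ref{sec:fdz}, to identify $\mrm{H}^0(B\otimes^{\mrm{L}}_A B)$ with $\mrm{H}^0(B) \otimes_{\mrm{H}^0(A)} \mrm{H}^0(B)$. Since $\mrm{H}^0(\varphi)$ is essentially of finite type and smooth, the classical smoothness criterion gives that the kernel of the multiplication map is, locally at any prime $\bar{\p}$, generated by a regular sequence $\bar a_1,\dots,\bar a_n$. Lift the $\bar a_i$ to degree-zero cocycles $a_i \in (B\otimes^{\mrm{L}}_A B)^0_{\bar{\p}}$ and form the Koszul DG-algebra $K$. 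Because each $\Delta(a_i)$ is a boundary in $B_{\bar\q}$, I can choose primitives $b_i \in B^{-1}_{\bar\q}$ with $d b_i = \Delta(a_i)$ and so obtain a canonical DG-algebra map $K \to B_{\bar\q}$ sending $e_i \mapsto b_i$. To check this is a quasi-isomorphism, I would first apply $\mrm{H}^0$, where by flatness it becomes the classical Koszul resolution of $\mrm{H}^0(B)_{\bar\q}$ by a regular sequence; higher-degree cohomology is then controlled by the spectral sequence associated to the Koszul filtration, again using flatness to degenerate it at the $\mrm{H}^0$-level.

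The main obstacle is $(1) \Rightarrow (2)$, where one must descend perfectness from the DG-setting to the classical one. The plan is to derive-tensor $B$ along $B\otimes^{\mrm{L}}_A B \to \mrm{H}^0(B\otimes^{\mrm{L}}_A B) = \mrm{H}^0(B) \otimes_{\mrm{H}^0(A)} \mrm{H}^0(B)$; this produces a perfect complex over $\mrm{H}^0(B) \otimes_{\mrm{H}^0(A)} \mrm{H}^0(B)$. The key technical step is to identify this derived base change with $\mrm{H}^0(B)$ concentrated in degree zero, which should rest crucially on the flatness hypothesis controlling the relevant Tor terms. Once this identification is available, the classical characterization---that a flat essentially of finite type map of noetherian rings $R \to S$ is smooth if and only if $S$ is perfect over $S \otimes_R S$---gives smoothness of $\mrm{H}^0(\varphi)$, completing the cycle. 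An attractive alternative for this last implication, which I would explore in parallel, is to work through the relative cotangent complex $L_{B/A} \simeq B \otimes^{\mrm{L}}_{B\otimes^{\mrm{L}}_A B} \opn{fib}(\Delta)[1]$: perfectness of $B$ makes $L_{B/A}$ perfect over $B$, and flatness of $\varphi$ identifies $\mrm{H}^0(L_{B/A})$ with $\Omega^1_{\mrm{H}^0(B)/\mrm{H}^0(A)}$, forcing the latter to be locally free of finite rank and thus giving smoothness.
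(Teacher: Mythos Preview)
Your plan for $(1)\Rightarrow(2)$---base-changing $B$ along $B\otimes^{\mrm{L}}_A B \to \mrm{H}^0(B\otimes^{\mrm{L}}_A B)$, identifying the result with $\mrm{H}^0(B)$ via flatness, and then invoking the classical equivalence of homological smoothness with smoothness for essentially finite type maps of noetherian rings---is exactly what the paper does (Theorem~\ref{thm:smoothRed} together with the results of Rodicio and Avramov--Iyengar cited in Corollaries~\ref{cor:homRegular} and~\ref{cor:smoothtohzsmooth}). So that step is fine.

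Where you diverge is in $(2)\Rightarrow(3)$. You build a map $K \to B_{\bar\q}$ by choosing primitives $b_i$ for the $\Delta(a_i)$; the paper instead uses the composition $B_{\bar\q} \xrightarrow{\varphi_{\bar\p}} (B\otimes^{\mrm{L}}_A B)_{\bar\p} \xrightarrow{\kappa} K$ coming from the retraction, which is canonical and is precisely the ``specific map'' the footnote refers to. Your construction goes the wrong way for the refined statement and is not canonical (it depends on the $b_i$). More importantly, the paper's verification that its map is a quasi-isomorphism avoids any spectral sequence: since the map is $B_{\bar\q}$-linear and both objects lie in $\cat{D}^-(B_{\bar\q})$, one applies the reduction functor $-\otimes^{\mrm{L}}_{B_{\bar\q}}\mrm{H}^0(B_{\bar\q})$ and uses Proposition~\ref{prop:reduction}(1). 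After reduction (using flatness and~(\ref{eqn:hz})) the diagram becomes the classical Koszul resolution of $\mrm{H}^0(B)_{\bar\q}$ by a regular sequence in $(\mrm{H}^0(B)\otimes_{\mrm{H}^0(A)}\mrm{H}^0(B))_{\bar\p}$, which is an isomorphism by hypothesis. This is cleaner and sharper than the Koszul-filtration spectral sequence you sketch, whose degeneration you have not actually justified.

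There is also a small gap in your $(3)\Rightarrow(1)$: condition $(3)$ provides Koszul resolutions only at primes $\bar\p$ of $\mrm{H}^0(B\otimes^{\mrm{L}}_A B)$ lying under some $\bar\q\in\opn{Spec}(\mrm{H}^0(B))$, whereas perfectness must be checked at \emph{all} primes. You need the observation that at primes outside the image of the diagonal the localization of $B$ vanishes. The paper sidesteps this entirely by proving $(1)\Leftrightarrow(2)$ and $(2)\Leftrightarrow(3)$ as separate equivalences, both handled uniformly by the reduction-to-$\mrm{H}^0$ trick rather than by a cycle of implications.
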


This result is contained in Section \ref{sec:smooth} below, 
where several other conditions equivalent to smoothness are given,
with some of them making much weaker finiteness and noetherian assumptions.
We find item (3) above particularly surprising, 
as it indicates that $B$, being perfect over $B\otimes^{\mrm{L}}_A B$,
is locally given as a derived quotient of $B\otimes^{\mrm{L}}_A B$.
The results of Section \ref{sec:smooth} are actually particular cases of more general results we prove in Section \ref{sec:retract}. 
There, we study diagrams of commutative non-positive DG-rings of the form
\[
A \xrightarrow{\varphi} B \xrightarrow{\psi} A
\]
such that $\psi\circ \varphi = 1_A$. Such a diagram is called a DG-ring retraction.
We study in Section \ref{sec:retract} below DG-ring retractions with the extra condition that the map $\varphi$ is flat,
and show that in such a situation, properties of the map $\psi$ ascend and descend to properties of the surjective map $\mrm{H}^0(\psi)$.
We then apply these results, given a flat map $A \to B$ to diagrams of the form
\[
B \to B\otimes^{\mrm{L}}_A B \xrightarrow{\Delta} B
\]
which allows us to deduce the above theorem about smooth maps.
As an application of the above theorem, 
we obtain the following relation between derived Hochschild homology and derived Hochschild cohomology,
realizing Van den Bergh duality (\cite{VdB}) in this setting:

\begin{cor*}
Let $\varphi:A \to B$ be a flat map between commutative non-positive DG-rings,
such that the rings $\mrm{H}^0(A)$ and $\mrm{H}^0(B)$ are noetherian,
the ring $\mrm{H}^0(B)$ has connected spectrum,
and the induced map $\mrm{H}^0(\varphi):\mrm{H}^0(A) \to \mrm{H}^0(B)$ is essentially of finite type.
If $\varphi$ is smooth then there exist an invertible $\mrm{H}^0(B)$-module $\bar{M}$ and an integer $n$,
such that for all $i \in \mathbb{Z}$ and all $X \in \cat{D}(B\otimes^{\mrm{L}}_A B)$ there is a natural isomorphism
\[
\opn{Ext}^i_{B\otimes^{\mrm{L}}_A B}(B,X) \cong \bar{M}\otimes_{\mrm{H}^0(B)} \opn{Tor}_{n-i}^{B\otimes^{\mrm{L}}_A B}(B,X).
\]
\end{cor*}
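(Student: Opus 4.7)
The plan is to deduce the corollary from a Van den Bergh-type duality, combining the perfectness of $B$ over $B^e := B \otimes^{\mrm{L}}_A B$ (given by the main theorem) with the local Koszul description in condition (3) of that theorem. Set $M := \mrm{RHom}_{B^e}(B, B^e) \in \cat{D}(B^e)$. Perfectness yields a natural isomorphism
\[
\mrm{RHom}_{B^e}(B, X) \iso M \otimes^{\mrm{L}}_{B^e} X \qquad (X \in \cat{D}(B^e)),
\]
so the task reduces to identifying $M$ up to a shift and an invertible twist.

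The identification is local. For $\bar{\q} \in \opn{Spec}(\mrm{H}^0(B))$, set $\bar{\p} := \mrm{H}^0(\Delta)^{-1}(\bar{\q})$. Condition (3) of the main theorem gives $B_{\bar{\q}} \simeq K := K((B^e)_{\bar{\p}}; \bar{a}_1, \dots, \bar{a}_{n(\bar{\q})})$ in $\cat{D}((B^e)_{\bar{\p}})$. The Koszul complex on a length-$m$ sequence in a commutative DG-ring $R$ is self-dual up to shift, $\mrm{RHom}_R(K,R) \simeq K[-m]$; applied with $R = (B^e)_{\bar{\p}}$ this gives
\[
M_{\bar{\p}} \simeq B_{\bar{\q}}[-n(\bar{\q})].
\]
The integer $n(\bar{\q})$ is the top cohomological degree in which $M_{\bar{\p}}$ is nonzero; it is therefore locally constant on $\opn{Spec}(\mrm{H}^0(B))$, and by connectedness it equals a constant $n$. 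Setting $\bar{M} := \mrm{H}^n(M)$, the local calculation shows $\bar{M}_{\bar{\q}} \simeq \mrm{H}^0(B)_{\bar{\q}}$ for every $\bar{\q}$, so $\bar{M}$ is an invertible $\mrm{H}^0(B)$-module.

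A Zariski descent then globalizes the local isomorphisms $M_{\bar{\p}} \simeq B_{\bar{\q}}[-n]$ into a quasi-isomorphism $M \simeq \widetilde{M}[-n]$ in $\cat{D}(B^e)$, where $\widetilde{M}$ is an invertible DG-$B$-module (locally quasi-isomorphic to $B$) with top cohomology $\bar{M}$; the local Koszul self-dualities differ, on overlaps, by units of $\mrm{H}^0(B)$, and these units assemble into the line bundle $\bar{M}$. The $B^e$-action on $\widetilde{M}$ factors through the diagonal $B^e \to B$, so
\[
\mrm{RHom}_{B^e}(B, X) \simeq \widetilde{M} \otimes^{\mrm{L}}_B \bigl(B \otimes^{\mrm{L}}_{B^e} X\bigr)[-n].
\]
Taking $i$-th cohomology, and invoking the identity $\mrm{H}^j(\widetilde{M} \otimes^{\mrm{L}}_B Y) \cong \bar{M} \otimes_{\mrm{H}^0(B)} \mrm{H}^j(Y)$ (a local statement on $\opn{Spec}(\mrm{H}^0(B))$ which is trivially true where $\widetilde{M} = B$ and $\bar{M} = \mrm{H}^0(B)$, then glued), one obtains the asserted
\[
\opn{Ext}^i_{B^e}(B, X) \cong \bar{M} \otimes_{\mrm{H}^0(B)} \opn{Tor}^{B^e}_{n-i}(B, X).
\]
The main obstacle is this descent step: the local Koszul identifications depend on the chosen generators $\bar{a}_i$, and one must verify rigorously that the resulting transition data glue into a well-defined invertible twist $\widetilde{M}$ of $B[-n]$ whose class is captured by the $\mrm{H}^0(B)$-module $\bar{M}$.
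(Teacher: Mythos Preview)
There is a genuine gap at the local computation step. You claim that condition (3) of the main theorem gives $B_{\bar{\q}} \simeq K\bigl((B^e)_{\bar{\p}};\bar{a}_1,\dots,\bar{a}_{n(\bar{\q})}\bigr)$ \emph{in $\cat{D}((B^e)_{\bar{\p}})$}, and then plug this into $\mrm{R}\opn{Hom}_{(B^e)_{\bar{\p}}}(-,(B^e)_{\bar{\p}})$ to invoke Koszul self-duality. But the quasi-isomorphism produced by the theorem is the composition
\[
B_{\bar{\q}} \xrightarrow{\varphi_{\bar{\p}}} (B^e)_{\bar{\p}} \xrightarrow{\kappa} K\bigl((B^e)_{\bar{\p}};\bar{a}_1,\dots,\bar{a}_n\bigr),
\]
and this composition is only $B_{\bar{\q}}$-linear, not $(B^e)_{\bar{\p}}$-linear: for $\varphi_{\bar{\p}}$ to be $(B^e)_{\bar{\p}}$-linear one would need $\varphi_{\bar{\p}}\circ\Delta_{\bar{\p}}=1$, whereas the retraction identity only gives $\Delta_{\bar{\p}}\circ\varphi_{\bar{\p}}=1$. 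The paper flags exactly this point in the proof of the underlying retraction theorem (Theorem~\ref{thm:VDB}): ``However, the isomorphism $(\diamond)$ is only $A_{\bar{\q}}$-linear, so does not help us to compute the above.'' So your computation of $M_{\bar{\p}}$ is not justified as written.

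The paper's remedy is to apply the reduction functor $-\otimes^{\mrm{L}}_{B_{\bar{\q}}}\mrm{H}^0(B_{\bar{\q}})$ before comparing with the Koszul complex. After reduction, both $\mrm{H}^0(B_{\bar{\q}})$ and $K\bigl(\mrm{H}^0((B^e)_{\bar{\p}});\bar{a}_1,\dots,\bar{a}_n\bigr)$ are concentrated in degree $0$, so the isomorphism between them \emph{does} hold in $\cat{D}\bigl(\mrm{H}^0((B^e)_{\bar{\p}})\bigr)$; classical Koszul self-duality then gives $M_{\bar{\q}}\otimes^{\mrm{L}}_{B_{\bar{\q}}}\mrm{H}^0(B_{\bar{\q}})\cong \mrm{H}^0(B_{\bar{\q}})[-n]$, and Proposition~\ref{prop:reduction}(2) lifts this to $M_{\bar{\q}}\cong B_{\bar{\q}}[-n]$. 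This also makes your Zariski-descent step unnecessary: once $M_{\bar{\q}}\cong B_{\bar{\q}}[-n]$ for all $\bar{\q}$, Proposition~\ref{prop:tiltingLocally} gives invertibility of $M$ directly, and the final cohomology identity follows from the fact that $M[n]$ lies in Minamoto's class $\mathcal{P}\subseteq\mathcal{F}$, via \cite[Lemma~4.6]{Mi}, with no gluing of local Koszul identifications required.
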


This corollary is contained in Corollary \ref{cor:VDB}.
As a corollary of this result, we obtain an existence result about rigid DG-modules in this setting,
see Corollary \ref{cor:rigid}.

In the final Section \ref{sec:smooth-hz-flat} we study homologically smooth maps $\k \to A$,
such that $\k$ is a ring, and such that the induced map $\k \to \mrm{H}^0(A)$ is flat.
Among the results of that section, 
we deduce that under a noetherian assumption, 
commutative non-positive DG-rings are never homologically smooth over a field. 
Precisely:

\begin{thm*}
Let $\k$ be a field
and let $A$ be a non-positive homologically smooth commutative noetherian DG-ring over $\k$,
such that $A$ has bounded cohomology and $A^0$ is a noetherian ring.
Then $A$ is quasi-isomorphic to an ordinary regular ring.
\end{thm*}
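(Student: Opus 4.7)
The plan is to combine the main theorem of the paper with condition (3) (the Koszul complex description of the diagonal) and a Nakayama--lemma argument.

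Since $\k$ is a field, the map $\varphi : \k \to A$ is trivially flat (of flat dimension $0$), and $\k = \mrm{H}^0(\k)$ is noetherian. The essentially-of-finite-type hypothesis on $\mrm{H}^0(\varphi)$ required to invoke the main theorem ought to follow from the homological smoothness and the noetherianity of $A$; I expect this to be established in Section \ref{sec:smooth-hz-flat}. Granting this, the main theorem yields that $\mrm{H}^0(A)$ is smooth over $\k$, and hence an ordinary regular noetherian ring.

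It then remains to show that $A \simeq \mrm{H}^0(A)$, which I would argue by contradiction. Suppose $\amp(A) = m > 0$, and pick $\bar{\q} \in \opn{Spec}(\mrm{H}^0(A))$ in the support of $\mrm{H}^{-m}(A)$. Let $\bar{\p} = \mrm{H}^0(\Delta)^{-1}(\bar{\q})$ and write $C := (A \otimes^{\mrm{L}}_\k A)_{\bar{\p}}$, which equals $(A \otimes_\k A)_{\bar{\p}}$ since $\k$ is a field. Condition (3) of the main theorem provides a quasi-isomorphism
\[
A_{\bar{\q}} \simeq K\bigl(C;\, \bar{a}_1,\ldots, \bar{a}_n\bigr).
\]
Because $\mrm{H}^\ast(A \otimes_\k A) = \mrm{H}^\ast(A) \otimes_\k \mrm{H}^\ast(A)$ and $\bar{\p}$ projects to $\bar{\q}$ under both factor maps $\mrm{H}^0(A) \otimes_\k \mrm{H}^0(A) \to \mrm{H}^0(A)$, we have $\mrm{H}^{-2m}(C) \supseteq (\mrm{H}^{-m}(A) \otimes_\k \mrm{H}^{-m}(A))_{\bar{\p}} \neq 0$.

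Now filter the Koszul complex by exterior-algebra degree. Since the total differential $d = d_C + d_{Kos}$ decomposes into one piece preserving the filtration and one lowering it by exactly one, the associated spectral sequence degenerates at $E_2$, and its $(p,q) = (0,-2m)$ entry
\[
E_2^{0,-2m} \;=\; \mrm{H}^{-2m}(C) \,\big/\, (\bar{a}_1,\ldots,\bar{a}_n)\,\mrm{H}^{-2m}(C)
\]
sits as a filtration quotient of $\mrm{H}^{-2m}(A_{\bar{\q}})$. Since $\amp(A) = m < 2m$, this cohomology vanishes, so the displayed quotient is zero. The ideal $(\bar{a}_1,\ldots,\bar{a}_n)$ lies in the maximal ideal of the local ring $\mrm{H}^0(C)$, and $\mrm{H}^{-2m}(C)$ is finitely generated over $\mrm{H}^0(C)$ by the noetherianity of $A$, so Nakayama's lemma forces $\mrm{H}^{-2m}(C) = 0$. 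This contradicts the nonvanishing established above, so $m = 0$ and $A \simeq \mrm{H}^0(A)$. I expect the main obstacle to be verifying the essentially-of-finite-type hypothesis in the first step; once that is in hand, this Koszul--Nakayama reasoning is quite clean.
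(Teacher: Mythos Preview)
Your proposal has a fatal error at the very first step. You claim that since $\k$ is a field, the map $\varphi : \k \to A$ is ``trivially flat (of flat dimension $0$)''. This is false. Flat dimension $0$ in the sense of Section~\ref{sec:fdz} requires $\amp(M \otimes^{\mrm{L}}_\k A) \le \amp(M)$ for every $M \in \cat{D}^{\mrm{b}}(\k)$; taking $M = \k$ already forces $\amp(A) = 0$, which is precisely the conclusion you are trying to reach. The paper says this explicitly at the opening of Section~\ref{sec:smooth-hz-flat}: when the base is an ordinary ring, imposing flat dimension $0$ is vacuous in this bad sense, so the main theorem of Section~\ref{sec:smooth} simply does not apply to $\k \to A$. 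Without it you have no Koszul description of the diagonal, and the second half of your argument never gets off the ground. (The essentially-of-finite-type issue you flag is not addressed in Section~\ref{sec:smooth-hz-flat} either; the paper never needs it.) There is also a secondary gap in your spectral-sequence step: the fact that the total differential splits as $d_C + d_{\mathrm{Kos}}$ with filtration shifts $0$ and $1$ is exactly the setup of any bicomplex spectral sequence, and does \emph{not} force degeneration at $E_2$. At the edge $p=0$ there are no outgoing differentials but there are incoming ones, so you only learn $E_\infty^{0,-2m} = 0$, which does not imply your $E_2^{0,-2m}$ vanishes.

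The paper's route is entirely different and bypasses the main theorem. One computes
\[
A \otimes^{\mrm{L}}_{A\otimes^{\mrm{L}}_{\k} A} \mrm{H}^0(A\otimes^{\mrm{L}}_{\k} A)
\;\cong\;
A \otimes^{\mrm{L}}_{A\otimes^{\mrm{L}}_{\k} A} \bigl(\mrm{H}^0(A)\otimes^{\mrm{L}}_{\k} \mrm{H}^0(A)\bigr)
\;\cong\;
\mrm{H}^0(A)\otimes^{\mrm{L}}_A \mrm{H}^0(A),
\]
the first isomorphism using K\"unneth together with flatness of $\k \to \mrm{H}^0(A)$, and the second being reduction to the diagonal. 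Homological smoothness then makes $\mrm{H}^0(A)\otimes^{\mrm{L}}_A \mrm{H}^0(A)$ perfect over $\mrm{H}^0(A)\otimes_{\k}\mrm{H}^0(A)$, hence of finite flat dimension over $\mrm{H}^0(A)$, hence (by noetherianity and finite generation) perfect over $\mrm{H}^0(A)$. Proposition~\ref{prop:compact} now gives that $\mrm{H}^0(A)$ is perfect over $A$, and a theorem of Yekutieli (J{\o}rgensen in the local case) forces $A \simeq \mrm{H}^0(A)$. Regularity then follows from Rodicio's theorem.
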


\section{Preliminaries}

Throughout this paper we will be working with commutative non-positive DG-rings.
These are defined to be graded rings $A = \bigoplus_{n=-\infty}^0 A^n$,
with a $\mathbb{Z}$-linear differential $d:A \to A$ of degree $+1$.
Commutativity of $A$ means that for any homogeneous elements $a,b \in A$ there is an equality $b\cdot a = (-1)^{\deg(a)\cdot \deg(b)}\cdot a \cdot b$,
and $a^2 = 0$ if $\deg(a)$ is odd.
A Leibniz rule connects the multiplication of $A$ and the differential $d$:
\[
d(a\cdot b) = d(a)\cdot b + (-1)^{\deg(a)}\cdot a \cdot d(b).
\]
A complete reference for commutative non-positive DG-rings and derived categories over them is the book \cite{YeBook}.
For such a DG-ring $A$, 
we will denote by $\cat{D}(A)$ the unbounded derived category of DG-modules over $A$.
For $M \in \cat{D}(A)$, we let $\amp(M)$ denote the cohomological amplitude of $M$.
The full triangulated subcategory of $\cat{D}(A)$ which consists of DG-modules with bounded cohomology is denoted by $\cat{D}^{\mrm{b}}(A)$, that is, of all DG-modules $M$ such that $\amp(M) < \infty$,
while the full triangulated subcategory of $\cat{D}(A)$ which consists of DG-modules with bounded above cohomology will be denoted by $\cat{D}^{-}(A)$.
The set of degree $0$ elements of $A$, denoted by $A^0$ is a commutative ring.
Moreover, $\mrm{H}^0(A)$ is a quotient of $A^0$, and is also a commutative ring.
For any DG-module $M$, its cohomologies $\mrm{H}^i(M)$ are $\mrm{H}^0(A)$-modules.
We will say that $A$ is noetherian if $\mrm{H}^0(A)$ is a noetherian ring,
and for all $i<0$ the $\mrm{H}^0(A)$-module $\mrm{H}^i(A)$ is finitely generated.
Given two maps $A \to B$ and $A \to C$ of commutative non-positive DG-rings,
the derived tensor product $B\otimes^{\mrm{L}}_A C$ is also a commutative non-positive DG-ring.
To construct it, one factors the map $A \to B$ (or $A \to C$) as $A \to \widetilde{B} \to B$,
such that $A \to \widetilde{B}$ is K-flat, and $\widetilde{B} \to B$ is a quasi-isomorphism.
Then, one sets $B\otimes^{\mrm{L}}_A C := \widetilde{B} \otimes_A C$.
By working in the homotopy category of DG-rings, one may make this into a bifunctor.
In any case, up to a zig-zag of quasi-isomorphisms, 
the result is independent of the chosen resolution of $B$ (or $C$).

\subsection{Reduction of DG-modules}

Given a commutative non-positive DG-ring $A$,
by using the natural map $A \to \mrm{H}^0(A)$,
one obtains a functor $-\otimes^{\mrm{L}}_A \mrm{H}^0(A):\cat{D}^{-}(A)\to \cat{D}^{-}(\mrm{H}^0(A))$ which is sometimes called the reduction functor associated to $A$. The following basic properties of it will be useful in the sequel:

\begin{prop}\label{prop:reduction}
Let $A$ be a commutative non-positive DG-ring.
\begin{enumerate}
\item A morphism $f:M\to N$ in $\cat{D}^{-}(A)$ is an isomorphism if and only if its reduction $f\otimes^{\mrm{L}}_A \mrm{H}^0(A):M\otimes^{\mrm{L}}_A \mrm{H}^0(A) \to N\otimes^{\mrm{L}}_A \mrm{H}^0(A)$ in $\cat{D}^{-}(\mrm{H}^0(A))$ is an isomorphism.
\item Given $M \in \cat{D}^{-}(A)$, it holds that $M \cong A$ in $\cat{D}(A)$ if and only if $M\otimes^{\mrm{L}}_A \mrm{H}^0(A) \cong \mrm{H}^0(A)$ in $\cat{D}(\mrm{H}^0(A))$.
\end{enumerate}
\end{prop}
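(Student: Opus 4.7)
\medskip

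\noindent\textbf{Proof plan.} Both parts reduce to a single Nakayama-type lemma:

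\smallskip
\emph{Key lemma.} If $C \in \cat{D}^{-}(A)$ satisfies $C \otimes^{\mrm{L}}_A \mrm{H}^0(A) \cong 0$ in $\cat{D}^{-}(\mrm{H}^0(A))$, then $C \cong 0$ in $\cat{D}(A)$.

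\smallskip
To prove the lemma I would argue by contraposition. Assume $C \not\cong 0$ and set $m := \sup\{i : \mrm{H}^i(C) \neq 0\}$, which is a finite integer since $C$ is bounded above. Choose a K-flat resolution $\widetilde{C} \to C$ over $A$ which is concentrated in degrees $\leq m$; this is possible because $A$ is non-positive and $C$ is bounded above. Then $C \otimes^{\mrm{L}}_A \mrm{H}^0(A)$ is represented by $\widetilde{C} \otimes_A \mrm{H}^0(A)$, a DG-module concentrated in degrees $\leq m$, and a direct computation in the top degree yields
\[
\mrm{H}^m\bigl(C \otimes^{\mrm{L}}_A \mrm{H}^0(A)\bigr) \;\cong\; \mrm{H}^m(C) \otimes_{\mrm{H}^0(A)} \mrm{H}^0(A) \;\cong\; \mrm{H}^m(C) \;\neq\; 0,
\]
contradicting the hypothesis. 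This top-degree formula is the only nontrivial input; it is essentially the standard statement that the derived tensor product preserves the top cohomology of bounded-above complexes, and I would either verify it directly on the truncated K-flat resolution or cite the relevant result from \cite{YeBook}. I expect this to be the sole technical obstacle; once it is in place, everything else is formal.

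\smallskip
Part (1) follows immediately. The ``only if'' direction is functoriality of $-\otimes^{\mrm{L}}_A \mrm{H}^0(A)$. For the ``if'' direction, complete $f$ to a distinguished triangle $M \xrightarrow{f} N \to C \to M[1]$ in $\cat{D}^{-}(A)$; since reduction is triangulated and preserves boundedness above, $C \otimes^{\mrm{L}}_A \mrm{H}^0(A) \cong 0$, so $C \cong 0$ by the lemma, whence $f$ is an isomorphism.

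\smallskip
For part (2), the ``only if'' direction is again trivial. For ``if'', first apply the top-degree cohomology formula with $m = \sup\{i : \mrm{H}^i(M) \neq 0\}$: since $M \otimes^{\mrm{L}}_A \mrm{H}^0(A) \cong \mrm{H}^0(A)$ is concentrated in degree $0$, one must have $m = 0$ and $\mrm{H}^0(M) \cong \mrm{H}^0(A)$ as $\mrm{H}^0(A)$-modules. Using the natural identification $\opn{Hom}_{\cat{D}(A)}(A, M) \cong \mrm{H}^0(M)$, lift the element $1 \in \mrm{H}^0(A) \cong \mrm{H}^0(M)$ to a morphism $f : A \to M$ in $\cat{D}(A)$. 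Its reduction $f \otimes^{\mrm{L}}_A \mrm{H}^0(A)$ is a morphism $\mrm{H}^0(A) \to M \otimes^{\mrm{L}}_A \mrm{H}^0(A) \cong \mrm{H}^0(A)$ in $\cat{D}(\mrm{H}^0(A))$ which, by construction, induces the identity on $\mrm{H}^0$; since both sides are concentrated in degree $0$, it is an isomorphism. Applying part (1) to $f$ concludes that $M \cong A$ in $\cat{D}(A)$.
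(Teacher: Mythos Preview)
Your proof is correct. The paper does not actually prove this proposition; it merely cites \cite[Proposition 3.1]{Ye1} for part (1) and \cite[Proposition 3.3(1)]{Ye1} for part (2). Your self-contained argument is the standard one: the Key Lemma is exactly the derived Nakayama/conservativity statement that underlies those cited results, and your proof of it via the top-degree K\"unneth formula $\mrm{H}^{m}(C\otimes^{\mrm{L}}_A \mrm{H}^0(A)) \cong \mrm{H}^m(C)$ is the usual route (this formula is the K\"unneth trick, cf.\ \cite[Lemma 13.1.36]{YeBook}). The deductions of (1) via the cone and of (2) by lifting a generator of $\mrm{H}^0(M)$ to a map $A\to M$ and then invoking (1) are likewise the expected arguments. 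So your approach is essentially what the cited reference does, just spelled out rather than cited.
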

\begin{proof}
The first statement is \cite[Proposition 3.1]{Ye1}, 
and the second statement is \cite[Proposition 3.3(1)]{Ye1}.
\end{proof}

\subsection{Homomorphisms of flat dimension $0$}\label{sec:fdz}

We now recall a notion of flatness for maps between commutative non-positive DG-rings.
Following \cite[Section 2.2.1]{GR} and \cite[Section 2.2.2]{TV} we make the following definition:
\begin{dfn}
We say that a map $\varphi:A \to B$ between commutative non-positive DG-rings has flat dimension $0$,
if for any $M \in \cat{D}^{\mrm{b}}(A)$,
there is an inequality
\[
\amp(M\otimes^{\mrm{L}}_A B) \le \amp(M).
\]
\end{dfn}

\begin{rem}
Given a homomorphism $A \to B$ of flat dimension $0$,
it follows from the definition that, considered as an object of $\cat{D}(A)$,
the DG-module $B$ belongs to the class $\mathcal{F}$,
in the sense of \cite[Section 4.2]{Mi}.
In particular, by \cite[Lemma 4.6(4)]{Mi},
for any $M \in \cat{D}(A)$
and any $n \in \mathbb{Z}$,
there is a natural isomorphism
\begin{equation}\label{eqn:flat}
\mrm{H}^n(M\otimes^{\mrm{L}}_A B) \cong \mrm{H}^0(B) \otimes_{\mrm{H}^0(A)} \mrm{H}^n(M).
\end{equation}
It follows that there is an isomorphism of DG-rings
\begin{equation}\label{eqn:hz}
B \otimes^{\mrm{L}}_A \mrm{H}^0(A) \cong \mrm{H}^0(B).
\end{equation}
See \cite[Section 2.2.1.1]{GR}, \cite[Lemma 6.3]{Sh} and \cite[Section 2.2.2]{TV} for discussions about these facts.
\end{rem}

Here is an important fact about maps of flat dimension $0$:
\begin{prop}\label{prop:fdisflat}
Given a map $\varphi:A \to B$ of flat dimension $0$,
the induced map $\mrm{H}^0(\varphi):\mrm{H}^0(A) \to \mrm{H}^0(B)$ is flat.
\end{prop}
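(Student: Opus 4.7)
The plan is to show directly that $\opn{Tor}^{\mrm{H}^0(A)}_i(N, \mrm{H}^0(B)) = 0$ for every $\mrm{H}^0(A)$-module $N$ and every $i > 0$, by interpreting this Tor inside $\cat{D}(A)$ and using the two isomorphisms \eqref{eqn:flat} and \eqref{eqn:hz} supplied by the flat-dimension-zero hypothesis.

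First, I regard $N$ as an object of $\cat{D}(A)$, concentrated in cohomological degree $0$, via restriction of scalars along the canonical map $A \to \mrm{H}^0(A)$. I then look at $N \otimes^{\mrm{L}}_A B$ in two different ways. On one hand, associativity of the derived tensor product, combined with the isomorphism of DG-rings \eqref{eqn:hz}, gives
\[
N \otimes^{\mrm{L}}_A B \cong N \otimes^{\mrm{L}}_{\mrm{H}^0(A)} \bigl(\mrm{H}^0(A) \otimes^{\mrm{L}}_A B\bigr) \cong N \otimes^{\mrm{L}}_{\mrm{H}^0(A)} \mrm{H}^0(B),
\]
and therefore $\mrm{H}^{-i}(N \otimes^{\mrm{L}}_A B) \cong \opn{Tor}^{\mrm{H}^0(A)}_i(N, \mrm{H}^0(B))$ for every $i \ge 0$. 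On the other hand, applying \eqref{eqn:flat} with $M = N$, I obtain
\[
\mrm{H}^{-i}(N \otimes^{\mrm{L}}_A B) \cong \mrm{H}^0(B) \otimes_{\mrm{H}^0(A)} \mrm{H}^{-i}(N),
\]
which vanishes for $i > 0$ since $N$ has cohomology only in degree $0$. Comparing the two computations yields the desired Tor-vanishing, and hence the flatness of $\mrm{H}^0(\varphi)$.

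There is no substantive obstacle: the entire argument is a one-line manipulation once \eqref{eqn:flat} and \eqref{eqn:hz} are in hand. The only point that requires mild care is the passage from $\cat{D}^{\mrm{b}}(A)$ (where flat dimension $0$ was defined) to a general $M \in \cat{D}(A)$ in \eqref{eqn:flat}; but this is precisely the content of the cited \cite[Lemma 4.6(4)]{Mi}, and in any case $N$, being concentrated in a single degree, lies in $\cat{D}^{\mrm{b}}(A)$, so the bounded version already suffices.
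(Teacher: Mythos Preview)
Your proof is correct and essentially identical to the paper's own argument: both use associativity and \eqref{eqn:hz} to identify $N\otimes^{\mrm{L}}_A B$ with $N\otimes^{\mrm{L}}_{\mrm{H}^0(A)}\mrm{H}^0(B)$, and then apply \eqref{eqn:flat} to the module $N$ (concentrated in degree $0$) to kill the higher Tor groups. The only cosmetic difference is the order in which the two identifications are written.
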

\begin{proof}
Given an $\mrm{H}^0(A)$-module $M$,
it follows from (\ref{eqn:hz}) and associativity of the derived tensor product that:
\[
\mrm{H}^0(B) \otimes^{\mrm{L}}_{\mrm{H}^0(A)} M \cong 
\left(B \otimes^{\mrm{L}}_A \mrm{H}^0(A)\right) \otimes^{\mrm{L}}_{\mrm{H}^0(A)} M \cong B\otimes^{\mrm{L}}_A M.
\]
Since $\mrm{H}^n(M) = 0$ for $n \ne 0$,
it follows from (\ref{eqn:flat}) that
\[
\mrm{H}^n\left(\mrm{H}^0(B) \otimes^{\mrm{L}}_{\mrm{H}^0(A)} M\right) \cong \mrm{H}^n\left(B\otimes^{\mrm{L}}_A M\right) \cong \mrm{H}^0(B) \otimes_{\mrm{H}^0(A)} \mrm{H}^n(M) = 0
\]
for all $n \ne 0$, so that $\mrm{H}^0(B)$ is flat over $\mrm{H}^0(A)$.
\end{proof}

Homomorphisms of flat dimension $0$ are stable under base change:
\begin{prop}\label{prop:flat-base-change}
Given a map $\varphi:A \to B$ of flat dimension $0$,
and a map $A \to C$,
the map $C \to B\otimes^{\mrm{L}}_A C$ is also of flat dimension $0$.
\end{prop}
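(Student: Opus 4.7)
The plan is to reduce the claim, via associativity of the derived tensor product, to the flat-dimension-$0$ hypothesis on $\varphi$ itself. Given $M \in \cat{D}^{\mrm{b}}(C)$, I want to bound the amplitude of $M \otimes^{\mrm{L}}_C (B \otimes^{\mrm{L}}_A C)$ in terms of $\amp(M)$.

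First I would restrict $M$ along the structure map $A \to C$, regarding it as an object of $\cat{D}^{\mrm{b}}(A)$; since restriction does not alter the underlying complex of abelian groups, the cohomological amplitude is unchanged, i.e.\ $\amp_A(M) = \amp_C(M)$. Next, I would invoke associativity (and symmetry) of the derived tensor product of commutative non-positive DG-rings to obtain a canonical isomorphism
\[
M \otimes^{\mrm{L}}_C (B \otimes^{\mrm{L}}_A C) \; \cong \; M \otimes^{\mrm{L}}_A B
\]
in $\cat{D}(C)$. Finally, because $\varphi:A \to B$ has flat dimension $0$, applying the defining inequality to $M$ (viewed in $\cat{D}^{\mrm{b}}(A)$) yields
\[
\amp\bigl(M \otimes^{\mrm{L}}_C (B \otimes^{\mrm{L}}_A C)\bigr) = \amp(M \otimes^{\mrm{L}}_A B) \le \amp_A(M) = \amp(M),
\]
which is exactly the assertion that $C \to B \otimes^{\mrm{L}}_A C$ has flat dimension $0$.

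The only nontrivial point is justifying the associativity isomorphism rigorously, since $B \otimes^{\mrm{L}}_A C$ is defined via a K-flat replacement. Concretely, I would factor $A \to B$ as $A \to \widetilde{B} \to B$ with $A \to \widetilde{B}$ K-flat and $\widetilde{B} \to B$ a quasi-isomorphism, so that $B \otimes^{\mrm{L}}_A C$ is computed by $\widetilde{B} \otimes_A C$, and then $M \otimes^{\mrm{L}}_C (\widetilde{B} \otimes_A C)$ is computed by $\widetilde{M} \otimes_C \widetilde{B} \otimes_A C$ for a K-flat resolution $\widetilde{M} \to M$ over $C$. The latter is naturally isomorphic to $\widetilde{M} \otimes_A \widetilde{B}$, and since $\widetilde{M}$ remains K-flat over $A$ (as $A \to C$ factors through it by restriction, and K-flatness is preserved under restriction when the target is K-flat — which does not apply in general; instead, one simply observes that $\widetilde{M} \otimes_A \widetilde{B}$ computes $M \otimes^{\mrm{L}}_A B$ because $\widetilde{B}$ is K-flat over $A$), this represents $M \otimes^{\mrm{L}}_A B$. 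This associativity is standard for derived tensor products over DG-rings and is implicit throughout \cite{YeBook}, so I would simply cite it.

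I expect no genuine obstacle here; the statement is essentially a formal consequence of associativity together with the definition of flat dimension $0$, and the proof should be just a few lines.
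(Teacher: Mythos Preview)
Your proof is correct and follows exactly the paper's approach: invoke associativity to identify $M \otimes^{\mrm{L}}_C (B \otimes^{\mrm{L}}_A C) \cong M \otimes^{\mrm{L}}_A B$, then apply the flat-dimension-$0$ hypothesis on $\varphi$. The paper does this in three lines without dwelling on the associativity justification, but your extra discussion is harmless.
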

\begin{proof}
Give $M \in \cat{D}^{\mrm{b}}(C)$, 
by the associativity of the derived tensor product:
\[
(B\otimes^{\mrm{L}}_A C) \otimes^{\mrm{L}}_C M \cong B\otimes^{\mrm{L}}_A M.
\]
Since $B$ has flat dimension $0$ over $A$, 
we know that $\amp(B\otimes^{\mrm{L}}_A M) \le \amp(M)$,
so that
\[
\amp\left((B\otimes^{\mrm{L}}_A C) \otimes^{\mrm{L}}_C M\right) = \amp(B\otimes^{\mrm{L}}_A M) \le \amp(M).
\]
\end{proof}

\subsection{Localization over commutative DG-rings}

Following \cite[Section 4]{Ye1},
given a commutative non-positive DG-ring $A$,
and given $\bar{\p} \in \opn{Spec}(\mrm{H}^0(A))$,
we define the localization $A_{\bar{\p}}$ as follows:
let $\p$ be the preimage of $\bar{\p}$ under the surjection $A^0 \to \mrm{H}^0(A)$,
and define $A_{\bar{\p}} := A \otimes_{A^0} A^0_{\p}$.
If $M$ is a DG-module over $A$,
we let $M_{\bar{\p}} := M\otimes^{\mrm{L}}_A A_{\bar{\p}}$.

\subsection{Koszul complexes and regular sequences over commutative DG-rings}

Given a commutative non-positive DG-ring $A$,
and given a finite sequence of elements $\bar{a}_1,\dots,\bar{a}_n \in \mrm{H}^0(A)$,
we define the Koszul complex over $A$ with respect to $\bar{a}_1,\dots,\bar{a}_n$ as follows:
for each $1\le i \le n$, 
choose some $a_i \in A^0$ whose image in $\mrm{H}^0(A)$ is equal to $\bar{a}_i$.
Consider $A$ as a DG-algebra over $\mathbb{Z}[x_1,\dots,x_n]$ by letting $x_i \mapsto a_i$,
and define
\[
K(A;\bar{a}_1,\dots,\bar{a}_n) := A\otimes^{\mrm{L}}_{\mathbb{Z}[x_1,\dots,x_n]} \mathbb{Z}.
\]
Equivalently, noting that 
\[
\mathbb{Z} \cong K(\mathbb{Z}[x_1,\dots,x_n];x_1,\dots,x_n),
\]
we may let
\[
K(A;\bar{a}_1,\dots,\bar{a}_n) := A\otimes_{\mathbb{Z}[x_1,\dots,x_n]} K(\mathbb{Z}[x_1,\dots,x_n];x_1,\dots,x_n).
\]
The Koszul complex $K(A;\bar{a}_1,\dots,\bar{a}_n)$ is also a commutative non-positive DG-ring,
and there is a natural map $\kappa:A \to K(A;\bar{a}_1,\dots,\bar{a}_n)$.
According to \cite[Lemma 2.8]{Mi2} or \cite[Proposition 2.6]{ShKos},
up to isomorphism in the homotopy category of DG-rings,
the Koszul complex is independent of the chosen lifts of $\bar{a}_1,\dots,\bar{a}_n$.
A commutative noetherian DG-ring $A$ is called local if the ring $\mrm{H}^0(A)$ is a local ring.
If $A$ is local, $\bar{\m}$ is the maximal ideal of $\mrm{H}^0(A)$,
and if $A$ has bounded cohomology,
then an element $\bar{a} \in \bar{\m}$ is called $A$-regular if it is $\mrm{H}^{\inf(A)}(A)$-regular,
that is, if the multiplication map 
\[
\bar{a}\cdot -:\mrm{H}^{\inf(A)}(A) \to \mrm{H}^{\inf(A)}(A)
\] 
is injective.
A sequence $\bar{a}_1,\dots,\bar{a}_n \in \bar{\m}$ is called $A$-regular,
if $\bar{a}_1$ is $A$-regular,
and the sequence $\bar{a}_2,\dots,\bar{a}_n$ is $K(A;\bar{a}_1)$-regular.

\subsection{Perfect and invertible DG-modules}

Following \cite[Sections 5,6]{Ye1} and \cite[Chapter 14]{YeBook},
a DG-module $M$ over a commutative non-positive DG-ring $A$
is called perfect if it belongs to the saturated full triangulated subcategory of $\cat{D}(A)$ generated by $A$.
This is equivalent to $M$ being a compact object of $\cat{D}(A)$.
We recall the following characterization of perfect DG-modules from \cite{Ye1}:
\begin{prop}\label{prop:compact}
Let $A$ be a commutative non-positive DG-ring,
and let $M \in \cat{D}^{-}(A)$.
Then $M$ is perfect over $A$ if and only if $M\otimes^{\mrm{L}}_A \mrm{H}^0(A)$ is perfect over $\mrm{H}^0(A)$.
\end{prop}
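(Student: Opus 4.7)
The forward direction is essentially formal. The derived tensor product $-\otimes^{\mrm{L}}_A \mrm{H}^0(A):\cat{D}(A)\to \cat{D}(\mrm{H}^0(A))$ is triangulated, preserves arbitrary direct sums (and in particular direct summands), and sends the compact generator $A$ to $\mrm{H}^0(A)$, which is perfect over itself. Since the class of perfect objects is the smallest thick subcategory containing the unit, it is preserved by any such functor.

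For the nontrivial direction, assume $\bar{M}:=M\otimes^{\mrm{L}}_A \mrm{H}^0(A)$ is perfect over $\mrm{H}^0(A)$. The plan is to construct a finitely generated semi-free (hence perfect) DG-module $F$ over $A$ together with a morphism $\varphi:F\to M$ in $\cat{D}(A)$ whose reduction $\varphi\otimes^{\mrm{L}}_A \mrm{H}^0(A)$ is an isomorphism in $\cat{D}(\mrm{H}^0(A))$. By Proposition \ref{prop:reduction}(1), $\varphi$ is then itself an isomorphism and $M\cong F$ is perfect.

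I would build $F$ by iterated cell attachments, inducting on the lexicographically ordered pair $(n,r)$, where $n=\sup(\bar M)$ (finite since $\bar M$ is perfect) and $r$ is the minimal number of generators of the finitely generated $\mrm{H}^0(A)$-module $\mrm{H}^n(\bar M)$. Because $A$ is non-positive and $n$ is the supremum, $\mrm{H}^n(M)$ is naturally an $\mrm{H}^0(A)$-module and the edge map $\mrm{H}^n(M)\to \mrm{H}^n(\bar M)$ is an isomorphism; in particular, a generator $\bar m_1$ of $\mrm{H}^n(\bar M)$ lifts to a cocycle $m_1\in M^n$. This lift gives a morphism $F_0:=A[-n]\to M$ whose cone $M_1$ still lies in $\cat{D}^-(A)$, and whose reduction $\bar M_1$ has invariant strictly smaller than that of $\bar M$ in the lexicographic order (either the supremum drops, or it stays the same and the top cohomology needs one fewer generator). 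Since the class of perfect objects over $\mrm{H}^0(A)$ is triangulated, $\bar M_1$ is again perfect, and the inductive hypothesis furnishes a finitely generated semi-free $F_1\to M_1$ inducing an isomorphism on reductions. Rotating and combining the two triangles produces the desired $F\to M$, with $F$ assembled from $F_0$ and $F_1$ and therefore finitely generated semi-free.

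The main technical obstacle is the well-foundedness and bookkeeping of the inductive invariant: one must verify that after taking the cone, $\bar M_1$ remains bounded above with finitely generated top cohomology, and that the lexicographic pair $(n,r)$ strictly decreases even in the delicate subcase where $r>1$ and the supremum is unchanged after attaching a single cell. The perfection of $\bar M$ and the fact that perfection is a triangulated condition handle these points, but verifying them cleanly (in particular, controlling the passage from a finite set of $\mrm{H}^0(A)$-generators of $\mrm{H}^n(\bar M)$ to lifts in $M^n$ that genuinely kill one generator at each step) is where the care is required.
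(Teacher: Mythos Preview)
The paper's own proof is the single sentence ``This is contained in \cite[Theorem 5.11]{Ye1},'' so your attempt must be judged on its own. Your forward direction is correct, and the architecture of the reverse direction---construct a perfect $F$ together with a map $F\to M$ whose reduction is an isomorphism, then apply Proposition~\ref{prop:reduction}(1)---is the right shape. The gap is that your induction is not well-founded, and this is not a bookkeeping issue: the lexicographic order on $(n,r)\in\mathbb{Z}\times\mathbb{N}$ admits infinite descending chains, and your cell-attachment process can realize one. What you are doing is building a free resolution of $\bar M$ from the top down, and free resolutions of perfect complexes need not be finite when $\mrm{H}^0(A)$ has non-free projectives. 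Concretely, over $R=\mrm{H}^0(A)=\mathbb{Z}[\sqrt{-5}]$ take $\bar M$ to be the nonprincipal ideal $I=(2,1+\sqrt{-5})$ in degree $0$. Your first cell kills the generator $2$, leaving $I/2R$; your second cell kills $\overline{1+\sqrt{-5}}$, and one checks that the kernel of the map $R\to I/2R$ sending $1\mapsto \overline{1+\sqrt{-5}}$ is exactly $I$ again, so $\bar M_2\cong I[1]$. The invariant then runs $(0,2),(0,1),(-1,2),(-1,1),\ldots$ without end.

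Perfection of $\bar M_1$ (which you correctly note) guarantees only that $(n_1,r_1)$ is defined and strictly smaller, not that the descent terminates. A genuine repair requires a different organizing principle---for instance, lifting a fixed bounded complex of finitely generated projectives representing $\bar M$ to $\cat{D}(A)$ in one finite pass and then lifting the idempotent that exhibits $\bar M$ as a retract, or first localizing at primes of $\mrm{H}^0(A)$ where projectives are free and minimal resolutions of perfect complexes terminate. This is precisely the content supplied by \cite[Theorem 5.11]{Ye1}, and it is not recoverable from the invariant you propose.
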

\begin{proof}
This is contained in \cite[Theorem 5.11]{Ye1}.
\end{proof}

A DG-module $M\in \cat{D}^{-}(A)$ is called invertible (these are called tilting DG-modules in \cite[Chapter 14]{YeBook}) if there exist a DG-module $N\in \cat{D}^{-}(A)$,
such that $M\otimes^{\mrm{L}}_A N \cong A$.
In that case, by \cite[Proposition 14.2.19]{YeBook}, it follows that $M$ is a perfect DG-module. 
The DG-module $N$ is then called the inverse of $M$, and
according to \cite[Corollary 14.4.27]{YeBook},
it is given by $N\cong \mrm{R}\opn{Hom}_A(M,A)$.

\begin{prop}\label{prop:red-is-invertible}
Let $A$ be a commutative non-positive DG-ring,
and let $M \in \cat{D}^{-}(A)$.
Then $M$ is invertible over $A$ if and only if $M\otimes^{\mrm{L}}_A \mrm{H}^0(A)$ is invertible over $\mrm{H}^0(A)$.
\end{prop}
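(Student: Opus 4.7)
The plan is to show the forward implication by a direct formal argument, and then reduce the reverse implication to Proposition \ref{prop:reduction}(1) and Proposition \ref{prop:compact} by identifying the candidate inverse, which is necessarily $\mrm{R}\opn{Hom}_A(M,A)$, with the $\mrm{H}^0(A)$-dual of $M\otimes^{\mrm{L}}_A \mrm{H}^0(A)$ after reduction.

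For the ``only if'' direction, suppose $M$ is invertible with inverse $N$, so $M\otimes^{\mrm{L}}_A N \cong A$. Applying the symmetric monoidal functor $-\otimes^{\mrm{L}}_A \mrm{H}^0(A)$ yields
\[
\left(M\otimes^{\mrm{L}}_A \mrm{H}^0(A)\right) \otimes^{\mrm{L}}_{\mrm{H}^0(A)} \left(N\otimes^{\mrm{L}}_A \mrm{H}^0(A)\right) \cong \mrm{H}^0(A),
\]
exhibiting $M\otimes^{\mrm{L}}_A \mrm{H}^0(A)$ as invertible over $\mrm{H}^0(A)$ with inverse $N\otimes^{\mrm{L}}_A \mrm{H}^0(A)$.

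For the ``if'' direction, set $\bar{A} := \mrm{H}^0(A)$ and $\bar{M} := M\otimes^{\mrm{L}}_A \bar{A}$. Since $\bar{M}$ is invertible over $\bar{A}$, it is in particular perfect over $\bar{A}$, so by Proposition \ref{prop:compact} the DG-module $M$ is perfect over $A$. Define the candidate inverse $N := \mrm{R}\opn{Hom}_A(M,A)$ and consider the evaluation morphism $\mrm{ev}:M\otimes^{\mrm{L}}_A N \to A$. By Proposition \ref{prop:reduction}(1) it suffices to prove that $\mrm{ev}\otimes^{\mrm{L}}_A \bar{A}$ is an isomorphism in $\cat{D}^{-}(\bar{A})$. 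The key input here is the standard base-change identity for perfect DG-modules, which gives a natural isomorphism
\[
\mrm{R}\opn{Hom}_A(M,A)\otimes^{\mrm{L}}_A \bar{A} \;\iso\; \mrm{R}\opn{Hom}_{\bar{A}}(\bar{M},\bar{A})
\]
compatible with evaluation. Under this identification, $\mrm{ev}\otimes^{\mrm{L}}_A \bar{A}$ becomes the evaluation map $\bar{M}\otimes^{\mrm{L}}_{\bar{A}} \mrm{R}\opn{Hom}_{\bar{A}}(\bar{M},\bar{A}) \to \bar{A}$, which is an isomorphism precisely because $\bar{M}$ is invertible over $\bar{A}$ (by \cite[Corollary 14.4.27]{YeBook} applied over $\bar{A}$).

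The main obstacle is verifying the base-change isomorphism displayed above. This is where perfectness of $M$ is essential: one reduces, by the way-out argument along the saturated subcategory generated by $A$, to the case $M = A$, where the statement is trivially $\bar{A}\cong \bar{A}$. Once this identification is in place, the remainder of the argument is purely formal, combining Proposition \ref{prop:reduction}(1) with the fact that invertibility is detected by the evaluation map being a quasi-isomorphism.
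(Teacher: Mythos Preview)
Your proof is correct and follows essentially the same approach as the paper: both directions use perfectness of $M$ (via Proposition~\ref{prop:compact}), base-change of $\mrm{R}\opn{Hom}$ for perfect DG-modules (what the paper cites as \cite[Theorem~14.1.22]{YeBook}), and the reduction principle of Proposition~\ref{prop:reduction}. The only organizational difference is that you work directly with the evaluation map $M\otimes^{\mrm{L}}_A \mrm{R}\opn{Hom}_A(M,A)\to A$ and invoke Proposition~\ref{prop:reduction}(1), whereas the paper factors through the isomorphism $\mrm{R}\opn{Hom}_A(M,A)\otimes^{\mrm{L}}_A M \iso \mrm{R}\opn{Hom}_A(M,M)$ and then applies Proposition~\ref{prop:reduction}(2) to $\mrm{R}\opn{Hom}_A(M,M)$; this is a cosmetic distinction.
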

\begin{proof}
If $M$ is invertible over $A$ with an inverse $N$,
the isomorphism 
\[
(M\otimes^{\mrm{L}}_A \mrm{H}^0(A)) \otimes^{\mrm{L}}_{\mrm{H}^0(A)} (N\otimes^{\mrm{L}}_A \mrm{H}^0(A)) \cong (M\otimes^{\mrm{L}}_A N) \otimes^{\mrm{L}}_A \mrm{H}^0(A)
\]
shows that $M\otimes^{\mrm{L}}_A \mrm{H}^0(A)$ is invertible over $\mrm{H}^0(A)$ with inverse being $N\otimes^{\mrm{L}}_A \mrm{H}^0(A)$.
Conversely, suppose that $M \in \cat{D}^{-}(A)$ satisfies that $M\otimes^{\mrm{L}}_A \mrm{H}^0(A)$ is invertible.
In particular, $M\otimes^{\mrm{L}}_A \mrm{H}^0(A)$ is perfect over $\mrm{H}^0(A)$, so by Proposition \ref{prop:compact}, 
it follows that $M$ is perfect over $A$.
By \cite[Theorem 14.1.22]{YeBook},
the natural map
\[
\mrm{R}\opn{Hom}_A(M,A) \otimes^{\mrm{L}}_A M \to \mrm{R}\opn{Hom}_A(M,M)
\]
is an isomorphism. 
Using \cite[Theorem 14.1.22]{YeBook} again, and by adjunction,
we obtain the following isomorphisms in $\cat{D}(\mrm{H}^0(A))$:
\begin{gather*}
\mrm{R}\opn{Hom}_A(M,M) \otimes^{\mrm{L}}_A \mrm{H}^0(A) \cong
\mrm{R}\opn{Hom}_A(M,M \otimes^{\mrm{L}}_A \mrm{H}^0(A)) \cong\\
\mrm{R}\opn{Hom}_{\mrm{H}^0(A)}(M \otimes^{\mrm{L}}_A \mrm{H}^0(A),M \otimes^{\mrm{L}}_A \mrm{H}^0(A)) \cong \\
\mrm{R}\opn{Hom}_{\mrm{H}^0(A)}(M \otimes^{\mrm{L}}_A \mrm{H}^0(A),\mrm{H}^0(A)) \otimes^{\mrm{L}}_{\mrm{H}^0(A)} (M \otimes^{\mrm{L}}_A \mrm{H}^0(A)) \cong \mrm{H}^0(A).
\end{gather*}
This implies by Proposition \ref{prop:reduction}(2) that $\mrm{R}\opn{Hom}_A(M,M) \cong A$, so that $M$ is invertible over $A$.
\end{proof}

If $A$ is an ordinary ring and $M$ is an $A$-module,
then it is well known that $M$ is invertible if and only if it is projective of rank $1$,
if and only if for all $\p \in \opn{Spec}(A)$ there is an isomorphism $M_{\p} \cong A_{\p}$.
Similarly, we have in the DG-setting:

\begin{prop}\label{prop:tiltingLocally}
Let $A$ be a commutative non-positive DG-ring,
let $M \in \cat{D}^{-}(A)$ be a bounded above DG-module,
and suppose that for any $\bar{\p} \in \opn{Spec}(\mrm{H}^0(A))$,
there exist $n \in \mathbb{Z}$ such that $M_{\bar{\p}} \cong A_{\bar{\p}}[n]$.
Then $M$ is an invertible DG-module over $A$.
\end{prop}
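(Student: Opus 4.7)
The plan is to reduce, via Proposition \ref{prop:red-is-invertible}, to the analogous statement over the ordinary commutative ring $\bar{A} := \mrm{H}^0(A)$, and then use a Zariski-local argument there. Set $\bar{M} := M \otimes^{\mrm{L}}_A \bar{A} \in \cat{D}^{-}(\bar{A})$. By Proposition \ref{prop:red-is-invertible}, it suffices to show that $\bar{M}$ is invertible in $\cat{D}(\bar{A})$. For each $\bar{\p} \in \opn{Spec}(\bar{A})$, by associativity of the derived tensor product and the identification $\mrm{H}^0(A_{\bar{\p}}) = \bar{A}_{\bar{\p}}$, we obtain
\[
\bar{M}_{\bar{\p}} \cong M_{\bar{\p}} \otimes^{\mrm{L}}_{A_{\bar{\p}}} \bar{A}_{\bar{\p}} \cong A_{\bar{\p}}[n_{\bar{\p}}] \otimes^{\mrm{L}}_{A_{\bar{\p}}} \bar{A}_{\bar{\p}} \cong \bar{A}_{\bar{\p}}[n_{\bar{\p}}],
\]
so $\bar{M}$ inherits the local-triviality hypothesis, and the problem becomes an ordinary commutative algebra statement: a bounded above complex over a commutative ring whose stalk at every prime is a shift of the local ring is invertible.

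Next, I would produce a Zariski-local trivialization of $\bar{M}$ around each $\bar{\p}$. Since $\bar{M}_{\bar{\p}}$ has a single nonvanishing cohomology $\mrm{H}^{-n_{\bar{\p}}}(\bar{M})_{\bar{\p}} \cong \bar{A}_{\bar{\p}}$, pick $c \in \mrm{H}^{-n_{\bar{\p}}}(\bar{M})$ lifting a generator; this determines (via a good truncation) a morphism $\bar{A}[n_{\bar{\p}}] \to \bar{M}$ in $\cat{D}(\bar{A})$ that is a quasi-isomorphism at $\bar{\p}$. A cone argument, using the bounded-above hypothesis, should extend this quasi-isomorphism to a principal open neighborhood $D(f)$ of $\bar{\p}$. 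Covering $\opn{Spec}(\bar{A})$ by finitely many such trivializing principal opens (quasi-compactness), the local shifts $n_i$ match on overlaps and thereby define a locally constant integer-valued function on the spectrum; on each connected component $\bar{M}$ is then the shift of an honestly invertible $\bar{A}$-module, so $\bar{M}$ is invertible in $\cat{D}(\bar{A})$, and the conclusion follows by Proposition \ref{prop:red-is-invertible}.

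The main obstacle is the Zariski-extension of the pointwise trivialization: promoting $\bar{A}_{\bar{\p}}[n_{\bar{\p}}] \iso \bar{M}_{\bar{\p}}$ to a quasi-isomorphism over a principal open, in the absence of any a priori finite-generation assumption on the cohomologies $\mrm{H}^i(\bar{M})$. The bounded-above hypothesis limits one to finitely many relevant degrees, and the cone construction reduces the task to showing that a bounded complex with vanishing stalk at $\bar{\p}$ already vanishes on a principal open neighborhood; this is where the argument really happens, and careful bookkeeping with the cone is needed to handle the potentially non-finitely-generated cohomology modules.
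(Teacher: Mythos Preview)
Your reduction to $\bar A:=\mrm{H}^0(A)$ via Proposition~\ref{prop:red-is-invertible}, together with the computation $\bar M_{\bar\p}\cong \bar A_{\bar\p}[n_{\bar\p}]$, is exactly what the paper does. At that point the paper finishes in one line by invoking \cite[tag 0FNT]{SP}, which asserts precisely that an object of $\cat{D}(\bar A)$ whose stalk at every prime is a shift of the local ring is invertible. So your strategy coincides with the paper's; the only difference is that the paper cites this ring-theoretic lemma while you try to reprove it.

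Your direct argument for that lemma has a genuine gap, however. First, the bounded-above hypothesis does \emph{not} limit you to finitely many relevant degrees: for the cone $C$ of your map $\bar A[n_{\bar\p}]\to\bar M$ one has $\mrm{H}^i(C)\cong\mrm{H}^i(\bar M)$ for all $i<-n_{\bar\p}-1$, so $C$ is only bounded above, not bounded. Second, and more seriously, the step you isolate as the crux---``a bounded complex with vanishing stalk at $\bar\p$ already vanishes on a principal open neighbourhood''---is false in this generality: already a single module $N$ with $N_{\bar\p}=0$ need not vanish on any $D(f)\ni\bar\p$ when $N$ is not finitely generated. You are right that this is where the argument really happens, but the cone-then-spread mechanism you describe cannot work as stated; the Stacks proof proceeds by a different route, exploiting the hypothesis at \emph{all} primes simultaneously rather than spreading out from one prime at a time.
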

\begin{proof}
Letting $\bar{M} := M\otimes^{\mrm{L}}_A \mrm{H}^0(A)$,
we have that for $\bar{\p} \in \opn{Spec}(\mrm{H}^0(A))$ there exist an $n \in \mathbb{Z}$,
such that $\bar{M}_{\bar{\p}} \cong \mrm{H}^0(A)_{\bar{\p}}[n]$.
This shows (for instance, by \cite[tag 0FNT]{SP}),
that $\bar{M}$ is invertible over $\mrm{H}^0(A)$,
so by Proposition \ref{prop:red-is-invertible},
this implies that $M$ is invertible over $A$.
\end{proof}

\section{DG-ring retractions of flat dimension $0$}\label{sec:retract}

A diagram of commutative non-positive of DG-rings of the form
\begin{equation}\label{eqn:retract}
A \xrightarrow{\varphi} B \xrightarrow{\psi} A
\end{equation}
such that $\psi \circ \varphi = 1_A$ is called a DG-ring retraction.
The key example to keep in mind is,
given a map $A \to B$,
it give rise to the DG-ring retraction
\[
B \to B\otimes^{\mrm{L}}_A B \to B.
\]
We now study retractions of the form (\ref{eqn:retract}) such that $A \xrightarrow{\varphi} B$ is of flat dimension $0$.

\begin{thm}\label{thm:smoothRed}
Given a DG-ring retraction of commutative non-positive DG-rings
\[
A \xrightarrow{\varphi} B \xrightarrow{\psi} A
\]
such that $\varphi$ has flat dimension $0$,
it holds that $A$ is perfect over $B$ if and only if $\mrm{H}^0(A)$ is perfect over $\mrm{H}^0(B)$.
\end{thm}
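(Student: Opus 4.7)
The plan is to reduce everything to the corresponding statement over $\mrm{H}^0(B)$ via Proposition \ref{prop:compact}, and then verify by a short computation that the reduction of $A$ as a $B$-module is precisely $\mrm{H}^0(A)$ as an $\mrm{H}^0(B)$-module. Since $A$ is non-positive, $A \in \cat{D}^{-}(B)$ through the $B$-algebra structure given by $\psi$, so Proposition \ref{prop:compact} applies and gives that $A$ is perfect over $B$ if and only if $A\otimes^{\mrm{L}}_B \mrm{H}^0(B)$ is perfect over $\mrm{H}^0(B)$. Thus it suffices to identify the latter DG-module.

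The key computation uses the flatness of $\varphi$ in a crucial way. By equation (\ref{eqn:hz}), since $\varphi$ has flat dimension $0$, there is an isomorphism of DG-rings $B\otimes^{\mrm{L}}_A \mrm{H}^0(A) \cong \mrm{H}^0(B)$, and moreover this isomorphism is compatible with the natural map $B \to \mrm{H}^0(B)$, so it is also an isomorphism of $B$-modules. Substituting and using associativity of the derived tensor product gives
\[
A\otimes^{\mrm{L}}_B \mrm{H}^0(B) \cong A\otimes^{\mrm{L}}_B\bigl(B\otimes^{\mrm{L}}_A \mrm{H}^0(A)\bigr) \cong A\otimes^{\mrm{L}}_A \mrm{H}^0(A) \cong \mrm{H}^0(A).
\]
Here the crucial point is that the composite right $A$-action on $A$ — first via $\varphi$ to view $A$ as a $B$-module, then via $\psi$ — is the identity, by the retraction hypothesis $\psi\circ\varphi = 1_A$. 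This is the only place where the retraction is used, but it is essential.

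Putting the two steps together, $A$ is perfect over $B$ if and only if $\mrm{H}^0(A)$ is perfect over $\mrm{H}^0(B)$ (with the $\mrm{H}^0(B)$-module structure induced by $\mrm{H}^0(\psi)$), as claimed. The only conceptual subtlety is ensuring that the isomorphism (\ref{eqn:hz}) is tracked as a $B$-module isomorphism rather than merely as DG-rings, so that the associativity manipulation is legitimate; I expect this to be the main (minor) obstacle, and it is handled by noting that the structure map $B \to B\otimes^{\mrm{L}}_A \mrm{H}^0(A)$ on the left factor corresponds under the isomorphism to the canonical projection $B \to \mrm{H}^0(B)$.
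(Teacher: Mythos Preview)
Your overall strategy is the same as the paper's: apply Proposition~\ref{prop:compact} and then identify $A\otimes^{\mrm{L}}_B \mrm{H}^0(B)$ with $\mrm{H}^0(A)$ via the isomorphism (\ref{eqn:hz}) and associativity. The difference lies in how the $\mrm{H}^0(B)$-linearity of that identification is justified.

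Your chain of isomorphisms, as written, is most naturally an isomorphism in $\cat{D}(A)$ (or $\cat{D}(\mrm{H}^0(A))$ via the rightmost factor). You correctly track the $B$-module structure in the first step, and you correctly use $\psi\circ\varphi=1_A$ to match the $A$-actions in the associativity step; but neither of these is the $\mrm{H}^0(B)$-structure needed to conclude perfectness over $\mrm{H}^0(B)$. The $\mrm{H}^0(B)$-action on $A\otimes^{\mrm{L}}_B(B\otimes^{\mrm{L}}_A\mrm{H}^0(A))$ is transported from the ring structure on $B\otimes^{\mrm{L}}_A\mrm{H}^0(A)$ and involves both tensor factors, so its survival under the associativity cancellation is not automatic from what you wrote. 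It can be checked directly (and the retraction is again what makes it work), but you have not done so.

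The paper sidesteps this bookkeeping: it only asserts the chain of isomorphisms in $\cat{D}(A)$, uses conservativity of the forgetful functor $\cat{D}(\mrm{H}^0(B))\to\cat{D}(A)$ to conclude that $A\otimes^{\mrm{L}}_B\mrm{H}^0(B)$ has cohomology concentrated in degree $0$, and then truncates to obtain an isomorphism in $\cat{D}(\mrm{H}^0(B))$ to $\mrm{H}^0(A\otimes^{\mrm{L}}_B\mrm{H}^0(B))\cong \mrm{H}^0(A)\otimes_B\mrm{H}^0(B)\cong\mrm{H}^0(A)$, where the $\mrm{H}^0(B)$-linearity is manifest. This two-step detour is slightly longer but avoids tracking module structures through derived associativity.
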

\begin{proof}
According to Proposition \ref{prop:compact}, 
we have that $A$ is perfect over $B$ if and only if
\[
A \otimes^{\mrm{L}}_B \mrm{H}^0(B)
\]
is perfect over $\mrm{H}^0(B)$.
Using (\ref{eqn:hz}), we have the following sequence of isomorphisms in $\cat{D}(A)$:
\[
A \otimes^{\mrm{L}}_B \mrm{H}^0(B) \cong A \otimes^{\mrm{L}}_B \left(B\otimes^{\mrm{L}}_A \mrm{H}^0(A)\right) \cong \mrm{H}^0(A).
\]
Since the forgetful functor $\cat{D}(\mrm{H}^0(B)) \to \cat{D}(A)$ is conservative and commutes with cohomology,
it follows that as an object of $\cat{D}(\mrm{H}^0(B))$,
the complex $A \otimes^{\mrm{L}}_B \mrm{H}^0(B)$ satisfies 
\[
\mrm{H}^n(A \otimes^{\mrm{L}}_B \mrm{H}^0(B)) = 0
\]
for all $n \ne 0$.
This implies by \cite[Proposition 7.3.10]{YeBook}, 
that the truncation map 
\[
A \otimes^{\mrm{L}}_B \mrm{H}^0(B) \to \mrm{H}^0(A \otimes^{\mrm{L}}_B \mrm{H}^0(B))
\]
is an isomorphism in $\cat{D}(\mrm{H}^0(B))$.
Since $A$ is non-positive, we know that there is a $\mrm{H}^0(B)$-linear isomorphism
\[
\mrm{H}^0(A \otimes^{\mrm{L}}_B \mrm{H}^0(B)) \cong \mrm{H}^0(A) \otimes_B \mrm{H}^0(B) \cong \mrm{H}^0(A).
\]
Combining the above two isomorphisms, 
we see that there is an isomorphism 
\[
A \otimes^{\mrm{L}}_B \mrm{H}^0(B) \cong \mrm{H}^0(A)
\]
in $\cat{D}(\mrm{H}^0(B))$.
This shows that $A$ is perfect over $B$ if and only if $\mrm{H}^0(A)$ is perfect over $\mrm{H}^0(B)$.
\end{proof}

Next, we wish to discuss a situation where for such a retraction, 
when $A$ is perfect over $B$,
it has, locally, an explicit semi-free resolution which demonstrates its perfectness.
Notice that given a retraction of DG-rings
\[
A \xrightarrow{\varphi} B \xrightarrow{\psi} A
\]
applying the functor $\mrm{H}^0$ to it give rise to a retraction
\[
\mrm{H}^0(A) \xrightarrow{\mrm{H}^0(\varphi)} \mrm{H}^0(B) \xrightarrow{\mrm{H}^0(\psi)} \mrm{H}^0(A)
\]
of commutative rings. 
In particular it follows that the map $\mrm{H}^0(\psi):\mrm{H}^0(B) \to \mrm{H}^0(A)$ is surjective.

Recall that a surjective homomorphism $\psi:B \to A$ between commutative noetherian rings is called a complete intersection homomorphism if the ideal $\ker(\psi)$ is generated by a $B$-regular sequence.
A surjection $\psi:B \to A$ is called a locally complete intersection if for any $\q \in \opn{Spec}(A)$,
letting $\p = \psi^{-1}(\q)$, the induced map $\psi_{\p}:B_{\p} \to A_{\q}$ is a complete intersection.
Equivalently, the ideal $\ker(\psi_{\p}) \subseteq B_{\p}$ is generated by a $B_{\p}$-regular sequence.
If $a_1,\dots,a_n$ is a $B_{\p}$-regular sequence such that $\ker(\psi_{\p}) = (a_1,\dots,a_n)$,
we deduce that 
\begin{equation}\label{eq:koszulis}
A_{\q} \cong B_{\p}/\ker(\psi_{\p}) \cong K(B_{\p};a_1,\dots,a_n),
\end{equation}
where the latter denotes the Koszul complex over $B_{\p}$ with respect to $a_1,\dots,a_n$.
If moreover the map $\psi$ is part of a retraction diagram
\[
A \xrightarrow{\varphi} B \xrightarrow{\psi} A
\]
then we may realize the isomorphism (\ref{eq:koszulis}) as the composition
\[
A_{\q} \xrightarrow{\varphi_{\p}} B_{\p} \xrightarrow{\kappa} K(B_{\p};a_1,\dots,a_n)
\]
where $\kappa$ is the canonical map from a ring to the Koszul complex over it.

To generalize this to the DG-setting, 
we propose the following definition:
\begin{dfn}
Let $\psi:B \to A$ be a map between commutative non-positive DG-rings,
such that $\mrm{H}^0(B)$ and $\mrm{H}^0(A)$ are noetherian,
and such that $\mrm{H}^0(\psi)$ is surjective.
\begin{enumerate}
\item We say that $\psi$ is a complete intersection if there exist 
$\bar{a}_1,\dots,\bar{a}_n \in \ker\left(\mrm{H}^0(\psi)\right)$ such that there is a quasi-isomorphism
$A \cong K(B;\bar{a}_1,\dots,\bar{a}_n)$.
\item We say that $\psi$ is a local complete intersection if for any $\bar{\q} \in \opn{Spec}(\mrm{H}^0(A))$,
letting $\bar{\p} = \mrm{H}^0(\psi)^{-1}(\bar{\q})$,
there exist $\bar{a}_1,\dots,\bar{a}_n \in \ker\left(\mrm{H}^0(\psi_{\p})\right)$ such that there is a quasi-isomorphism
$A_{\bar{\q}} \cong K(B_{\bar{\p}};\bar{a}_1,\dots,\bar{a}_n)$.
\end{enumerate}
\end{dfn}

\begin{thm}\label{thm:lciRed}
Assume there is a DG-ring retraction of commutative non-positive DG-rings
\[
A \xrightarrow{\varphi} B \xrightarrow{\psi} A
\]
such that $\varphi$ has flat dimension $0$,
and the rings $\mrm{H}^0(A)$ and $\mrm{H}^0(B)$ are noetherian.
Then the following are equivalent:
\begin{enumerate}
\item The map $\mrm{H}^0(\psi)$ is a local complete intersection.
\item The map $\psi$ is a local complete intersection,
such that for any $\bar{\q} \in \opn{Spec}(\mrm{H}^0(A))$,
letting $\bar{\p} = \mrm{H}^0(\psi)^{-1}(\bar{\q})$,
the quasi-isomorphism $A_{\bar{\q}} \cong K(B_{\bar{\p}};\bar{a}_1,\dots,\bar{a}_n)$ 
is given by the composition
\[
A_{\bar{\q}} \xrightarrow{\varphi_{\bar{\p}}} B_{\bar{\p}} \xrightarrow{\kappa} K(B_{\bar{\p}};\bar{a}_1,\dots,\bar{a}_n).
\]
\end{enumerate}
\end{thm}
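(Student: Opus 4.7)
The plan is to prove both implications by localizing the retraction at a chosen prime $\bar{\q}$ and then using Proposition \ref{prop:reduction}(1) to reduce the relevant statements in $\cat{D}(A_{\bar{\q}})$ to their classical analogues in $\cat{D}(\mrm{H}^0(A)_{\bar{\q}})$. Localization is compatible with the retraction structure, so one obtains a DG-ring retraction $A_{\bar{\q}} \xrightarrow{\varphi_{\bar{\p}}} B_{\bar{\p}} \xrightarrow{\psi_{\bar{\p}}} A_{\bar{\q}}$ in which $\varphi_{\bar{\p}}$ is still of flat dimension $0$ (a localized version of Proposition \ref{prop:flat-base-change}); in particular (\ref{eqn:hz}) yields $B_{\bar{\p}} \otimes^{\mrm{L}}_{A_{\bar{\q}}} \mrm{H}^0(A)_{\bar{\q}} \cong \mrm{H}^0(B)_{\bar{\p}}$, and the retraction identity $\mrm{H}^0(\psi)_{\bar{\p}} \circ \mrm{H}^0(\varphi)_{\bar{\p}} = 1$ holds at this local level.

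For the implication (1) $\Rightarrow$ (2), given a regular sequence $\bar{a}_1,\dots,\bar{a}_n$ in $\mrm{H}^0(B)_{\bar{\p}}$ generating $\ker(\mrm{H}^0(\psi)_{\bar{\p}})$, I would consider the composition
\[
\alpha \colon A_{\bar{\q}} \xrightarrow{\varphi_{\bar{\p}}} B_{\bar{\p}} \xrightarrow{\kappa} K(B_{\bar{\p}};\bar{a}_1,\dots,\bar{a}_n)
\]
and check that $\alpha$ is a quasi-isomorphism via Proposition \ref{prop:reduction}(1). Applying $-\otimes^{\mrm{L}}_{A_{\bar{\q}}} \mrm{H}^0(A)_{\bar{\q}}$, the source becomes $\mrm{H}^0(A)_{\bar{\q}}$; writing the Koszul complex as $B_{\bar{\p}} \otimes^{\mrm{L}}_{\mathbb{Z}[x_1,\dots,x_n]} \mathbb{Z}$ and combining associativity of the derived tensor product with the flatness identity, the target becomes the classical Koszul complex $K(\mrm{H}^0(B)_{\bar{\p}};\bar{a}_1,\dots,\bar{a}_n)$. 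Because the $\bar{a}_i$ form a regular sequence in the maximal ideal of the noetherian local ring $\mrm{H}^0(B)_{\bar{\p}}$, this classical complex is quasi-isomorphic to $\mrm{H}^0(B)_{\bar{\p}}/(\bar{a}_1,\dots,\bar{a}_n) \cong \mrm{H}^0(A)_{\bar{\q}}$, and the retraction identity identifies the reduced composition with the identity of $\mrm{H}^0(A)_{\bar{\q}}$. Proposition \ref{prop:reduction}(1) then promotes this to a quasi-isomorphism $\alpha$ in $\cat{D}(A_{\bar{\q}})$.

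For (2) $\Rightarrow$ (1), the same reduction converts the given quasi-isomorphism $A_{\bar{\q}} \simeq K(B_{\bar{\p}};\bar{a}_1,\dots,\bar{a}_n)$ realized by $\kappa \circ \varphi_{\bar{\p}}$ into a quasi-isomorphism $\mrm{H}^0(A)_{\bar{\q}} \simeq K(\mrm{H}^0(B)_{\bar{\p}};\bar{a}_1,\dots,\bar{a}_n)$ in $\cat{D}(\mrm{H}^0(B)_{\bar{\p}})$. The target therefore has cohomology concentrated in degree $0$, and since the $\bar{a}_i$ lie in the maximal ideal of the local noetherian ring $\mrm{H}^0(B)_{\bar{\p}}$, the classical Koszul regularity criterion forces $\bar{a}_1,\dots,\bar{a}_n$ to be a regular sequence; a short diagram chase using $\mrm{H}^0(\psi)_{\bar{\p}} \circ \mrm{H}^0(\varphi)_{\bar{\p}} = 1$ then yields $(\bar{a}_1,\dots,\bar{a}_n) = \ker(\mrm{H}^0(\psi)_{\bar{\p}})$, which is precisely the local complete intersection condition on $\mrm{H}^0(\psi)$. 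The main obstacle I anticipate is making the reduction computation precise — specifically, verifying that the reduction of the composition $\kappa \circ \varphi_{\bar{\p}}$ really is the analogous classical composition $\mrm{H}^0(A)_{\bar{\q}} \to \mrm{H}^0(B)_{\bar{\p}} \to K(\mrm{H}^0(B)_{\bar{\p}};\bar{a}_1,\dots,\bar{a}_n)$; this boils down to tracking the $\mathbb{Z}[x_1,\dots,x_n]$-algebra structure through associativity of the derived tensor product in combination with the flatness identity (\ref{eqn:hz}).
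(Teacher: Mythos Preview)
Your proposal is correct and follows essentially the same route as the paper: localize the retraction, observe that $\varphi_{\bar{\p}}$ remains of flat dimension $0$, view $\kappa\circ\varphi_{\bar{\p}}$ as a morphism in $\cat{D}(A_{\bar{\q}})$, and invoke Proposition~\ref{prop:reduction}(1) after checking that the reduction $-\otimes^{\mrm{L}}_{A_{\bar{\q}}}\mrm{H}^0(A_{\bar{\q}})$ carries the diagram to its classical analogue (the paper cites \cite[Proposition~2.9]{ShKos} for the Koszul base-change, where you unwind the definition via $B_{\bar{\p}}\otimes^{\mrm{L}}_{\mathbb{Z}[x_1,\dots,x_n]}\mathbb{Z}$, but this is the same computation). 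Your explicit mention of the diagram chase showing $(\bar{a}_1,\dots,\bar{a}_n)=\ker\bigl(\mrm{H}^0(\psi)_{\bar{\p}}\bigr)$ in the converse direction is a detail the paper leaves implicit.
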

\begin{proof}
Suppose that $\mrm{H}^0(\psi)$ is a local complete intersection.
Given any $\bar{\q} \in \opn{Spec}(\mrm{H}^0(A))$,
letting $\bar{\p} = \mrm{H}^0(\psi)^{-1}(\bar{\q})$,
take a $\mrm{H}^0(B)$-regular sequence $\bar{a}_1,\dots,\bar{a}_n \in \ker\left(\mrm{H}^0(\psi_{\bar{\p}})\right)$ 
such that $\ker\left(\mrm{H}^0(\psi_{\bar{\p}})\right) = (\bar{a}_1,\dots,\bar{a}_n)$.
Since localization is exact, the map $\varphi_{\bar{\p}}$ is still of flat dimension $0$.
Consider the diagram of maps of DG-rings:
\begin{equation}\label{eq:diag-before-tens}
A_{\bar{\q}} \xrightarrow{\varphi_{\bar{\p}}} B_{\bar{\p}} \xrightarrow{\kappa} K(B_{\bar{\p}};\bar{a}_1,\dots,\bar{a}_n).
\end{equation}
We note that these two maps are $A_{\bar{\q}}$-linear:
for $\varphi_{\bar{\p}}$ this is clear, while $\kappa$ can be obtained by applying the functor $-\otimes_{\mathbb{Z}[x_1,\dots,x_n]} B_{\bar{\p}}$ to the map 
\[
\mathbb{Z}[x_1,\dots,x_n] \to K(\mathbb{Z}[x_1,\dots,x_n];x_1,\dots,x_n),
\]
and hence $\kappa$ is $B_{\bar{\p}}$-linear, so a fortiori it is $A_{\bar{\q}}$-linear.
Hence, to show that $\kappa \circ \varphi_{\bar{\p}}$ is a quasi-isomorphism,
we may consider it as a morphism in $\cat{D}(A_{\bar{\q}})$,
and it is enough to show that in $\cat{D}(A_{\bar{\q}})$ it is an isomorphism.
Since all these DG-rings are non-positive,
by Proposition \ref{prop:reduction}(1), it is enough to show that the map
\[
\left(\kappa \circ \varphi_{\bar{\p}}\right) \otimes^{\mrm{L}}_{A_{\bar{\q}}} \mrm{H}^0(A_{\bar{\q}}) :
A_{\bar{\q}} \otimes^{\mrm{L}}_{A_{\bar{\q}}} \mrm{H}^0(A_{\bar{\q}}) \to K(B_{\bar{\p}};\bar{a}_1,\dots,\bar{a}_n) \otimes^{\mrm{L}}_{A_{\bar{\q}}} \mrm{H}^0(A_{\bar{\q}})
\]
is an isomorphism.
Using (\ref{eqn:hz}) 
the application of the functor $-\otimes^{\mrm{L}}_{A_{\bar{\q}}} \mrm{H}^0(A_{\bar{\q}})$ 
to the map $\varphi_{\bar{\p}}:A_{\bar{\q}} \to B_{\bar{\p}}$
yields the map
\begin{equation}\label{eq:diag-after-tens}
\mrm{H}^0(A_{\bar{\q}}) \xrightarrow{\mrm{H}^0(\varphi_{\bar{\p}})} \mrm{H}^0(B_{\bar{\p}}) 
\end{equation}
On the other hand, by using (\ref{eqn:hz}) and \cite[Proposition 2.9]{ShKos}, we have the following isomorphisms in $\cat{D}(A_{\bar{\q}})$:
\begin{gather*}
K(B_{\bar{\p}};\bar{a}_1,\dots,\bar{a}_n) \otimes^{\mrm{L}}_{A_{\bar{\q}}} \mrm{H}^0(A_{\bar{\q}}) \cong
K(B_{\bar{\p}};\bar{a}_1,\dots,\bar{a}_n) \otimes^{\mrm{L}}_{B_{\bar{\p}}} B_{\bar{\p}} \otimes^{\mrm{L}}_{A_{\bar{\q}}} \mrm{H}^0(A_{\bar{\q}}) \cong\\
K(B_{\bar{\p}};\bar{a}_1,\dots,\bar{a}_n) \otimes^{\mrm{L}}_{B_{\bar{\p}}} \mrm{H}^0(B_{\bar{\p}}) \cong
K(\mrm{H}^0(B_{\bar{\p}});\bar{a}_1,\dots,\bar{a}_n).
\end{gather*}
Note that by our regularity assumption on the sequence $\bar{a}_1,\dots,\bar{a}_n$,
it holds that the DG-module $K(\mrm{H}^0(B_{\bar{\p}});\bar{a}_1,\dots,\bar{a}_n)$ has non-zero cohomology only at degree $0$,
as is the case for the DG-modules in (\ref{eq:diag-after-tens}). 
But for such DG-modules, the functors $-\otimes^{\mrm{L}}_{A_{\bar{\q}}} \mrm{H}^0(A_{\bar{\q}})$  
and $\mrm{H}^0(-\otimes^{\mrm{L}}_{A_{\bar{\q}}} \mrm{H}^0(A_{\bar{\q}}))$ are isomorphic.
It follows that the diagram obtained from applying the functor $-\otimes^{\mrm{L}}_{A_{\bar{\q}}} \mrm{H}^0(A_{\bar{\q}})$ 
to the diagram (\ref{eq:diag-before-tens}) is isomorphic in $\cat{D}(A_{\bar{\q}})$ to the diagram
\begin{equation}\label{eq:diag-after-tens-final}
\mrm{H}^0(A_{\bar{\q}}) \to \mrm{H}^0(B_{\bar{\p}}) \to K(\mrm{H}^0(B_{\bar{\p}});\bar{a}_1,\dots,\bar{a}_n).
\end{equation}

By our assumption on $\mrm{H}^0(\psi)$,
the composition (\ref{eq:diag-after-tens-final}) is an isomorphism,
which implies that (\ref{eq:diag-before-tens}) is also an isomorphism,
so that $\psi$ is a local complete intersection.
Conversely, if there are $\bar{a}_1,\dots,\bar{a}_n \in \ker\left(\mrm{H}^0(\psi_{\bar{\p}})\right)$
such that the composition
\[
A_{\bar{\q}} \xrightarrow{\varphi_{\bar{\p}}} B_{\bar{\p}} \xrightarrow{\kappa} K(B_{\bar{\p}};\bar{a}_1,\dots,\bar{a}_n)
\]
is a quasi-isomorphism,
applying $-\otimes^{\mrm{L}}_{A_{\bar{\q}}} \mrm{H}^0(A_{\bar{\q}})$ to it gives the diagram
\[
\mrm{H}^0(A_{\bar{\q}}) \to \mrm{H}^0(B_{\bar{\p}}) \to K(\mrm{H}^0(B_{\bar{\p}});\bar{a}_1,\dots,\bar{a}_n).
\]
in which the composition is still a quasi-isomorphism,
which implies that $\bar{a}_1,\dots,\bar{a}_n$ is a $\mrm{H}^0(B_{\bar{\p}})$-regular sequence,
so that $\mrm{H}^0(\psi)$ is a local complete intersection.
\end{proof}

Under a stronger noetherian assumption, we can say slightly more:
\begin{prop}\label{prop:isRegular}
In the situation of Theorem \ref{thm:lciRed},
assume also that the DG-rings $A$ and $B$ are noetherian and have bounded cohomology.
Then the sequence $\bar{a}_1,\dots,\bar{a}_n$ is a $B_{\bar{\p}}$-regular sequence.
\end{prop}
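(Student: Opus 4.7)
The plan is to parse regularity along the chain of Koszul DG-rings
\[
K_0 := B_{\bar{\p}}, \quad K_k := K(B_{\bar{\p}}; \bar{a}_1, \dots, \bar{a}_k) \cong K(K_{k-1}; \bar{a}_k),
\]
for $1 \le k \le n$, and to verify that each passage from $K_{k-1}$ to $K_k$ preserves $\inf$; by the single-element criterion this will identify each $\bar{a}_k$ as $K_{k-1}$-regular, which is the inductive definition of regularity of the whole sequence. Under the standing hypotheses all these intermediate DG-rings inherit the right properties: localization preserves bounded cohomology and finite generation of each $\mrm{H}^i$ (since $\mrm{H}^i$ commutes with the flat localization $B^0 \to B^0_{\p}$), each Koszul step is a two-term cone over $K_{k-1}$ so $K_k$ remains noetherian with bounded cohomology, and locality is inherited since $\mrm{H}^0(K_k) = \mrm{H}^0(K_{k-1})/(\bar{a}_k)$. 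Theorem \ref{thm:lciRed}(2) supplies the crucial quasi-isomorphism $K_n \simeq A_{\bar{\q}}$.

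The first ingredient is the equality $\inf(A_{\bar{\q}}) = \inf(B_{\bar{\p}})$. By Proposition \ref{prop:flat-base-change} the map $\varphi_{\bar{\p}}$ still has flat dimension $0$, so (\ref{eqn:flat}) gives a natural isomorphism $\mrm{H}^i(B_{\bar{\p}}) \cong \mrm{H}^0(B_{\bar{\p}}) \otimes_{\mrm{H}^0(A_{\bar{\q}})} \mrm{H}^i(A_{\bar{\q}})$. The retraction identity $\mrm{H}^0(\psi_{\bar{\p}}) \circ \mrm{H}^0(\varphi_{\bar{\p}}) = 1$ exhibits any nonzero $\mrm{H}^0(A_{\bar{\q}})$-module $M$ as a quotient of $M \otimes_{\mrm{H}^0(A_{\bar{\q}})} \mrm{H}^0(B_{\bar{\p}})$, forcing the tensor product to be nonzero. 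Hence $\mrm{H}^i(A_{\bar{\q}}) \ne 0$ iff $\mrm{H}^i(B_{\bar{\p}}) \ne 0$, so $\inf(A_{\bar{\q}}) = \inf(B_{\bar{\p}}) =: s$, and in particular $\inf(K_0) = s = \inf(K_n)$.

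The inductive step uses the distinguished triangle $K_{k-1} \xrightarrow{\bar{a}_k} K_{k-1} \to K_k \to K_{k-1}[1]$ and the resulting long exact sequence. Assuming $\inf(K_{k-1}) = s$, inspection of degrees $\le s-1$ gives $\mrm{H}^i(K_k) = 0$ for $i < s-1$ and $\mrm{H}^{s-1}(K_k) = \ker\bigl(\bar{a}_k \colon \mrm{H}^s(K_{k-1}) \to \mrm{H}^s(K_{k-1})\bigr)$, while at degree $s$ the cokernel $\mrm{H}^s(K_{k-1})/\bar{a}_k \mrm{H}^s(K_{k-1})$ injects into $\mrm{H}^s(K_k)$ and is nonzero by Nakayama applied to the nonzero finitely generated $\mrm{H}^0(K_{k-1})$-module $\mrm{H}^s(K_{k-1})$ (with $\bar{a}_k$ in the maximal ideal of the local ring $\mrm{H}^0(K_{k-1})$). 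Thus $\inf(K_k) \in \{s-1, s\}$, the value $s$ occurring precisely when $\bar{a}_k$ acts injectively on $\mrm{H}^s(K_{k-1})$, i.e., when $\bar{a}_k$ is $K_{k-1}$-regular. Since $\inf$ can drop by at most one per step and both endpoints of the chain equal $s$, every step must preserve $\inf$, whence each $\bar{a}_k$ is $K_{k-1}$-regular, giving the desired $B_{\bar{\p}}$-regularity. I expect the main obstacle to be the careful verification that every intermediate $K_k$ actually enjoys the noetherian-local-bounded-cohomology conditions required to invoke Nakayama and the single-element DG-regularity criterion; the retraction hypothesis itself is used exactly once, namely to pin down the two endpoints of the $\inf$-chain.
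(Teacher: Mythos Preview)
Your proof is correct and follows the same strategy as the paper: establish the equality $\amp(B_{\bar{\p}}) = \amp(A_{\bar{\q}}) = \amp\bigl(K(B_{\bar{\p}};\bar{a}_1,\dots,\bar{a}_n)\bigr)$ using the flat-dimension-$0$ identity (\ref{eqn:flat}) together with the retraction, and then deduce regularity of the sequence from this amplitude equality. The only difference is that the paper invokes \cite[Lemma~2.13(2)]{Mi2} for this last implication, whereas you reprove that lemma's content by hand via the step-by-step Koszul triangle and Nakayama argument, making your version self-contained.
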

\begin{proof}
Since the map $A_{\bar{\q}} \to B_{\bar{\p}}$ is of flat dimension $0$,
we know by (\ref{eqn:flat}) that
\[
\mrm{H}^n(B_{\bar{\p}}) \cong \mrm{H}^0(B_{\bar{\p}}) \otimes_{\mrm{H}^0(A_{\bar{\q}})} \mrm{H}^n(A_{\bar{\q}}),
\]
so that $\amp(B_{\bar{\p}}) \le \amp(A_{\bar{\q}})$.
On the other hand, the fact that
\[
A_{\bar{\q}} \to B_{\bar{\p}} \to A_{\bar{\q}}
\]
is a retraction shows that applying the functor $\mrm{H}^n$ gives a retraction
\[
\mrm{H}^n(A_{\bar{\q}}) \to \mrm{H}^n(B_{\bar{\p}}) \to \mrm{H}^n(A_{\bar{\q}})
\]
which implies that $\amp(B_{\bar{\p}}) \ge \amp(A_{\bar{\q}})$.
It follows that there are equalities
\[
\amp(B_{\bar{\p}}) = \amp(A_{\bar{\q}}) = \amp\left(K(B_{\bar{\p}};\bar{a}_1,\dots,\bar{a}_n)\right).
\]
By \cite[Lemma 2.13(2)]{Mi2}, this implies that $\bar{a}_1,\dots,\bar{a}_n$ is a $B_{\bar{\p}}$-regular sequence.
\end{proof}

Finally, we combine the local complete intersection and perfectness assumptions,
and obtain the following variant of Van den Bergh duality (see \cite[Theorem 1]{VdB}):

\begin{thm}\label{thm:VDB}
Assume there is a DG-ring retraction of commutative non-positive DG-rings
\[
A \xrightarrow{\varphi} B \xrightarrow{\psi} A
\]
Suppose that:
\begin{enumerate}
\item The map $\varphi$ has flat dimension $0$.
\item The rings $\mrm{H}^0(A)$ and $\mrm{H}^0(B)$ are noetherian,
and the map $\mrm{H}^0(\psi)$ is a local complete intersection.
\item The DG-ring $A$ is perfect over $B$.
\end{enumerate}
Then the following holds:
\begin{enumerate}
\item The DG-module $\mrm{R}\opn{Hom}_B(A,B) \in \cat{D}(A)$ is an invertible DG-module.
\item Assume further that the noetherian ring $\mrm{H}^0(A)$ has connected spectrum.
Then there exist an $n \in \mathbb{N}$,
and an invertible $\mrm{H}^0(A)$-module $\bar{N}$ such that for any $X \in \cat{D}(B)$ and any $i \in \mathbb{Z}$
there is a natural isomorphism
\[
\mrm{H}^i \left(\mrm{R}\opn{Hom}_B(A,X)\right) \cong 
\bar{N}\otimes_{\mrm{H}^0(A)} \mrm{H}^{i-n} \left( A\otimes^{\mrm{L}}_B X \right).
\]
\end{enumerate}
\end{thm}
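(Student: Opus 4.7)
The plan is to prove part (1) by applying Proposition \ref{prop:tiltingLocally} to $L := \mrm{R}\opn{Hom}_B(A,B) \in \cat{D}^{\mrm{b}}(A)$, reducing invertibility to producing, at each prime $\bar{\q} \in \opn{Spec}(\mrm{H}^0(A))$ with $\bar{\p} := \mrm{H}^0(\psi)^{-1}(\bar{\q})$, an integer $n_{\bar{\q}} \in \mathbb{N}$ and an isomorphism $L_{\bar{\q}} \cong A_{\bar{\q}}[-n_{\bar{\q}}]$ in $\cat{D}(A_{\bar{\q}})$. The retraction structure together with the perfectness of $A$ over $B$ will identify $A \otimes^{\mrm{L}}_B B_{\bar{\p}}$ with $A_{\bar{\q}}$, so that flat base change yields $L_{\bar{\q}} \cong \mrm{R}\opn{Hom}_{B_{\bar{\p}}}(A_{\bar{\q}}, B_{\bar{\p}})$. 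Hypothesis (2) will then allow invoking Theorem \ref{thm:lciRed}(2), producing a quasi-isomorphism $A_{\bar{\q}} \cong K(B_{\bar{\p}}; \bar{a}_1, \ldots, \bar{a}_{n_{\bar{\q}}})$, and the classical Koszul self-duality $\mrm{R}\opn{Hom}_{B_{\bar{\p}}}(K, B_{\bar{\p}}) \cong K[-n_{\bar{\q}}]$ will deliver the required local form of $L_{\bar{\q}}$.

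For part (2), I will set $\bar{L} := L \otimes^{\mrm{L}}_A \mrm{H}^0(A)$, which is invertible over $\mrm{H}^0(A)$ by Proposition \ref{prop:red-is-invertible}. The local calculation from part (1) gives $\bar{L}_{\bar{\q}} \cong \mrm{H}^0(A)_{\bar{\q}}[-n_{\bar{\q}}]$, so that $\bar{\q} \mapsto n_{\bar{\q}}$ is locally constant, and the connectedness hypothesis forces it to be a single $n \in \mathbb{N}$. Since $\bar{L}$ then has cohomology concentrated in degree $n$, I will obtain $\bar{L} \cong \bar{N}[-n]$ for the invertible $\mrm{H}^0(A)$-module $\bar{N} := \mrm{H}^n(\bar{L})$. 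Next, the tensor-hom formula for the perfect $B$-DG-module $A$ gives natural isomorphisms
\begin{equation*}
\mrm{R}\opn{Hom}_B(A, X) \cong L \otimes^{\mrm{L}}_B X \cong L \otimes^{\mrm{L}}_A (A \otimes^{\mrm{L}}_B X),
\end{equation*}
which reduces the corollary, with $Y := A \otimes^{\mrm{L}}_B X$, to producing a natural identification $\mrm{H}^i(L \otimes^{\mrm{L}}_A Y) \cong \bar{N} \otimes_{\mrm{H}^0(A)} \mrm{H}^{i-n}(Y)$.

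For this last step, my plan is to build a canonical comparison map by reducing $L$ modulo the ideal $\ker(A \to \mrm{H}^0(A))$ (exploiting the flatness of $\bar{N}$ over $\mrm{H}^0(A)$), then check that it is an isomorphism after localizing at each $\bar{\q}$. Locally $\bar{N}_{\bar{\q}}$ is free, and applying Proposition \ref{prop:reduction}(2) to the shift $L_{\bar{\q}}[n]$ (whose reduction is $\mrm{H}^0(A)_{\bar{\q}}$) will promote the local reduction to an isomorphism $L_{\bar{\q}} \cong A_{\bar{\q}}[-n]$ in $\cat{D}(A_{\bar{\q}})$, after which $L_{\bar{\q}} \otimes^{\mrm{L}}_{A_{\bar{\q}}} Y_{\bar{\q}} \cong Y_{\bar{\q}}[-n]$ endows both sides with the common local cohomology $\mrm{H}^{i-n}(Y)_{\bar{\q}}$. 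The hardest step will be producing the global natural comparison map; once that is in place, the local computation above completes the verification.
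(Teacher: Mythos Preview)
Your plan for part~(1) contains a genuine gap, and it is precisely the one the paper explicitly flags. The quasi-isomorphism
\[
A_{\bar{\q}} \xrightarrow{\varphi_{\bar{\p}}} B_{\bar{\p}} \xrightarrow{\kappa} K(B_{\bar{\p}};\bar{a}_1,\dots,\bar{a}_n)
\]
produced by Theorem~\ref{thm:lciRed} is only $A_{\bar{\q}}$-linear, not $B_{\bar{\p}}$-linear: for it to be $B_{\bar{\p}}$-linear one would need $\kappa\circ\varphi_{\bar{\p}}\circ\psi_{\bar{\p}} = \kappa$, which would force $\varphi_{\bar{\p}}\circ\psi_{\bar{\p}} = 1_{B_{\bar{\p}}}$, and the retraction axiom only gives the identity the other way. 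Hence you cannot feed this isomorphism into the functor $\mrm{R}\opn{Hom}_{B_{\bar{\p}}}(-,B_{\bar{\p}})$ and invoke Koszul self-duality. The paper writes this out verbatim: ``it is tempting to use Theorem~\ref{thm:lciRed}\dots\ However, the isomorphism $(\diamond)$ is only $A_{\bar{\q}}$-linear, so does not help us to compute the above.''

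The paper's fix is to postpone the Koszul step until after reduction. One first computes
\[
\mrm{R}\opn{Hom}_{B_{\bar{\p}}}(A_{\bar{\q}},B_{\bar{\p}})\otimes^{\mrm{L}}_{A_{\bar{\q}}} \mrm{H}^0(A_{\bar{\q}})
\cong
\mrm{R}\opn{Hom}_{\mrm{H}^0(B_{\bar{\p}})}(\mrm{H}^0(A_{\bar{\q}}),\mrm{H}^0(B_{\bar{\p}})),
\]
using perfectness, adjunction, and the identification $A_{\bar{\q}}\otimes^{\mrm{L}}_{B_{\bar{\p}}}\mrm{H}^0(B_{\bar{\p}}) \cong \mrm{H}^0(A_{\bar{\q}})$ in $\cat{D}(\mrm{H}^0(B_{\bar{\p}}))$ established in Theorem~\ref{thm:smoothRed}. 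At this level the isomorphism $\mrm{H}^0(A_{\bar{\q}})\cong K(\mrm{H}^0(B_{\bar{\p}});\bar{a}_1,\dots,\bar{a}_n)$ is between complexes concentrated in degree~$0$ and \emph{is} $\mrm{H}^0(B_{\bar{\p}})$-linear, so classical Koszul self-duality gives $\mrm{H}^0(A_{\bar{\q}})[-n]$, and then Proposition~\ref{prop:reduction}(2) lifts this to $L_{\bar{\q}}\cong A_{\bar{\q}}[-n]$ in $\cat{D}(A_{\bar{\q}})$. Your outline also glosses over why localizing $L$ at $\bar{\q}$ over $A$ agrees with localizing at $\bar{\p}$ over $B$; the paper justifies this via an explicit check that $\psi_*\varphi_*(N)\cong N$ in $\cat{D}(B)$.

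For part~(2) your strategy of building a global comparison map and checking it locally is different from the paper's, which instead cites \cite[Corollary~7.2.2.19]{Lu} to place $L[n]$ in Minamoto's class $\mathcal{P}\subseteq\mathcal{F}$ and then reads the cohomology formula off \cite[Lemma~4.6]{Mi}. Your route is plausible but, as you acknowledge, the construction of the natural comparison map is the nontrivial step and is not carried out; the paper's appeal to the class $\mathcal{F}$ packages exactly that construction.
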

\begin{proof}
Let us denote by $\varphi_*:\cat{D}(B)\to\cat{D}(A)$ and $\psi_*:\cat{D}(A)\to\cat{D}(B)$ the forgetful functors along $\varphi$ and $\psi$ respectively.
Let us set 
\[
N = \mrm{R}\opn{Hom}_B(A,B) \in \cat{D}(B),
\]
and let $M=\varphi_*(N)\in \cat{D}(A)$.
Given any $\bar{\q} \in \opn{Spec}(\mrm{H}^0(A))$,
let $\bar{\p} = \mrm{H}^0(\psi)^{-1}(\bar{\q})$.
We will calculate the localization $M_{\bar{\q}}$.
We note that there is an isomorphism $A\otimes^{\mrm{L}}_B B_{\bar{\p}} \cong A_{\bar{\q}}$ in $\cat{D}(A)$,
so the functors $-\otimes^{\mrm{L}}_A A_{\bar{\q}}$ and $\psi_*(-)\otimes^{\mrm{L}}_B B_{\bar{\p}}$,
considered as functors $\cat{D}(A) \to \cat{D}(A)$ are isomorphic.
Using this we have the following isomorphisms in $\cat{D}(A)$:
\[
M_{\bar{\q}} \cong \varphi_*(N)\otimes^{\mrm{L}}_A A_{\bar{\q}} \cong \psi_*(\varphi_*(N))\otimes^{\mrm{L}}_B B_{\bar{\p}}.
\]
We claim that the DG-modules $N$ and $\psi_*(\varphi_*(N))$ in $\cat{D}(B)$ are isomorphic.
Since the DG-module $\psi_*(\varphi_*(N))$ is obtained from $N$ by applying forgetful functors, 
we only need to verify that the $B$-actions on these two DG-modules coincide. 
To see this, let us take a K-injective resolution $B\cong I$ in $\cat{D}(B)$, 
and then calculate $N \cong \opn{Hom}_B(A,I)$.
Given $f \in \opn{Hom}_B(A,I)^i$, and given $b\in B^j$ and some $a\in A$, 
we have that 
\[
(b\cdot f)(a) = b\cdot (f(a)) = (-1)^{i\cdot j} f(b\cdot a) = (-1)^{i\cdot j} f(\psi(b)\cdot a)
\]
where the last equality follows from the fact that $B$ acts on $A$ via $\psi$.
On the other hand, given $f' \in \psi_*(\varphi_*(\opn{Hom}_B(A,I)))^i$,
denoting by $f \in \opn{Hom}_B(A,I)^i$ the corresponding element before applying these forgetful functors,
and given $b\in B^j$ and $a\in A$, we obtain
\[
(b\cdot f')(a) = (\varphi(\psi(b)) \cdot f)(a) = (-1)^{i\cdot j} f(\psi(\varphi(\psi(b)))\cdot a) = (-1)^{i\cdot j} f(\psi(b)\cdot a)
\]
where here we used the fact that $\psi \circ \varphi = 1_A$.
This shows that $\psi_*(\varphi_*(N)) \cong N$.
It follows that to compute $M_{\bar{\q}}$, 
we may instead compute 
$N\otimes^{\mrm{L}}_B B_{\bar{\p}}$.
By \cite[Theorem 14.1.22]{YeBook}, which holds since $A$ is perfect over $B$,
\[
N\otimes^{\mrm{L}}_B B_{\bar{\p}}
\cong
\mrm{R}\opn{Hom}_B(A,B) \otimes^{\mrm{L}}_B B_{\bar{\p}} \cong
\mrm{R}\opn{Hom}_B(A,B_{\bar{\p}}).
\]
By adjunction we have:
\[
\mrm{R}\opn{Hom}_B(A,B_{\bar{\p}}) \cong 
\mrm{R}\opn{Hom}_{B_{\bar{\p}}}(A\otimes^{\mrm{L}}_B B_{\bar{\p}},B_{\bar{\p}}) \cong 
\mrm{R}\opn{Hom}_{B_{\bar{\p}}}(A_{\bar{\q}},B_{\bar{\p}}).
\]
To prove that $M$ is invertible, by Proposition \ref{prop:tiltingLocally},
it is enough to show that 
\begin{equation}\label{eqn:goalTilting}
M_{\bar{\q}} \cong \mrm{R}\opn{Hom}_{B_{\bar{\p}}}(A_{\bar{\q}},B_{\bar{\p}}) \stackrel{?}{\cong} A_{\bar{\q}}[m]
\end{equation}
for some integer $m$.
To show this, it is tempting to use Theorem \ref{thm:lciRed} which implies that there exist $\bar{a}_1,\dots,\bar{a}_n \in \mrm{H}^0(B_{\bar{\p}})$,
such that $A_{\bar{\q}} \stackrel{(\diamond)}{\cong} K(B_{\bar{\p}};\bar{a}_1,\dots,\bar{a}_n)$.
However, the isomorphism $(\diamond)$ is only $A_{\bar{\q}}$-linear, 
so does not help us to compute the above.
Instead, we will calculate:
\begin{gather*}
\mrm{R}\opn{Hom}_{B_{\bar{\p}}}(A_{\bar{\q}},B_{\bar{\p}})\otimes^{\mrm{L}}_{A_{\bar{\q}}} \mrm{H}^0(A_{\bar{\q}}) \cong\\
\mrm{R}\opn{Hom}_{B_{\bar{\p}}}(A_{\bar{\q}},B_{\bar{\p}} \otimes^{\mrm{L}}_{A_{\bar{\q}}} \mrm{H}^0(A_{\bar{\q}})) \cong\\
\mrm{R}\opn{Hom}_{B_{\bar{\p}}}(A_{\bar{\q}},\mrm{H}^0(B_{\bar{\p}})). 
\end{gather*}
Here, the first isomorphism is by \cite[Theorem 14.1.22]{YeBook}, which holds since $A_{\bar{\q}}$ is perfect over $B_{\bar{\p}}$,
and the second isomorphism is by (\ref{eqn:hz}).
By adjunction, we have that
\begin{gather*}
\mrm{R}\opn{Hom}_{B_{\bar{\p}}}(A_{\bar{\q}},\mrm{H}^0(B_{\bar{\p}})) \cong
\mrm{R}\opn{Hom}_{\mrm{H}^0(B_{\bar{\p}})}(A_{\bar{\q}}\otimes^{\mrm{L}}_{B_{\bar{\p}}} \mrm{H}^0(B_{\bar{\p}}),\mrm{H}^0(B_{\bar{\p}})).
\end{gather*}
As in the proof of Theorem \ref{thm:smoothRed}, 
we know that there is an isomorphism $A_{\bar{\q}}\otimes^{\mrm{L}}_{B_{\bar{\p}}} \mrm{H}^0(B_{\bar{\p}}) \cong \mrm{H}^0(A_{\bar{\q}})$
in $\cat{D}(\mrm{H}^0(B_{\bar{\p}}))$,
so we see that
\[
\mrm{R}\opn{Hom}_{\mrm{H}^0(B_{\bar{\p}})}(A_{\bar{\q}}\otimes^{\mrm{L}}_{B_{\bar{\p}}} \mrm{H}^0(B_{\bar{\p}}),\mrm{H}^0(B_{\bar{\p}})) \cong 
\mrm{R}\opn{Hom}_{\mrm{H}^0(B_{\bar{\p}})}(\mrm{H}^0(A_{\bar{\q}}),\mrm{H}^0(B_{\bar{\p}})).
\]
Since the map $\mrm{H}^0(\psi_{\bar{\p}})$ is a complete intersection map,
there exist a $\mrm{H}^0(B_{\bar{\p}})$-regular sequence $\bar{a}_1,\dots,\bar{a}_n \in \mrm{H}^0(B_{\bar{\p}})$ 
such that $\mrm{H}^0(A_{\bar{\q}}) \cong K(\mrm{H}^0(B_{\bar{\p}});\bar{a}_1,\dots,\bar{a}_n)$.
Note that this isomorphism, which is between two complexes with cohomology is concentrated in degree $0$ holds in $\cat{D}(\mrm{H}^0(B_{\bar{\p}}))$.
Hence, by the self-duality of the Koszul complex, we see that
\begin{gather*}
\mrm{R}\opn{Hom}_{\mrm{H}^0(B_{\bar{\p}})}(\mrm{H}^0(A_{\bar{\q}}),\mrm{H}^0(B_{\bar{\p}}))
\cong\\
\mrm{R}\opn{Hom}_{\mrm{H}^0(B_{\bar{\p}})}(K(\mrm{H}^0(B_{\bar{\p}});\bar{a}_1,\dots,\bar{a}_n),\mrm{H}^0(B_{\bar{\p}}))
\cong\\
K(\mrm{H}^0(B_{\bar{\p}});\bar{a}_1,\dots,\bar{a}_n)[-n] \cong \mrm{H}^0(A_{\bar{\q}})[-n].
\end{gather*}
We have thus obtained an isomorphism
\[
\mrm{R}\opn{Hom}_{B_{\bar{\p}}}(A_{\bar{\q}},B_{\bar{\p}})\otimes^{\mrm{L}}_{A_{\bar{\q}}} \mrm{H}^0(A_{\bar{\q}}) \cong \mrm{H}^0(A_{\bar{\q}})[-n]
\]
in $\cat{D}(\mrm{H}^0(A_{\bar{\q}}))$.
By Proposition \ref{prop:reduction}(2),
this implies that
\[
\mrm{R}\opn{Hom}_{B_{\bar{\p}}}(A_{\bar{\q}},B_{\bar{\p}}) \cong A_{\bar{\q}}[-n]
\]
in $\cat{D}(A_{\bar{\q}})$, which proves that (\ref{eqn:goalTilting}) holds,
and hence shows that $M$ is invertible.
Next, assume that $\opn{Spec}(\mrm{H}^0(A))$ has connected spectrum.
This implies that the integer $n$ from above is constant, 
and independent of the chosen $\bar{\q}$.
Hence, the complex of $\mrm{H}^0(A)$-modules $M\otimes^{\mrm{L}}_A \mrm{H}^0(A)$ has amplitude $0$,
so it is a shift of an invertible $\mrm{H}^0(A)$-module.
This implies by \cite[Corollary 7.2.2.19]{Lu} that $M$ is a shift of an object in the class $\mathcal{P}$,
in the sense of \cite[Section 2.2]{Mi}.
Using \cite[Theorem 14.1.22]{YeBook}, 
we have that
\[
\mrm{H}^i \left(\mrm{R}\opn{Hom}_B(A,X)\right) \cong
\mrm{H}^i \left( N\otimes^{\mrm{L}}_B X \right) \cong
\mrm{H}^i \left( M\otimes^{\mrm{L}}_A (A\otimes^{\mrm{L}}_B X) \right). 
\]
Let $n = \sup(M)$, and let $\bar{N} = \mrm{H}^n(M)$.
Notice that $\bar{N} \cong \mrm{H}^n(M\otimes^{\mrm{L}}_A \mrm{H}^0(A))$,
so that $\bar{N}$ is an invertible $\mrm{H}^0(A)$-module.
Note further that $M[n]$ belongs to the class $\mathcal{P}$,
so it also belongs to the class $\mathcal{F}$.
Hence, by \cite[Lemma 4.6]{Mi},
we have that
\[
\mrm{H}^i \left( M\otimes^{\mrm{L}}_A (A\otimes^{\mrm{L}}_B X) \right) \cong
\mrm{H}^{i-n} \left( M[n]\otimes^{\mrm{L}}_A (A\otimes^{\mrm{L}}_B X) \right) 
\cong \bar{N}\otimes_{\mrm{H}^0(A)} \mrm{H}^{i-n} \left( A\otimes^{\mrm{L}}_B X \right).
\]
\end{proof}

\section{Homologically smooth maps of flat dimension $0$}\label{sec:smooth}

We now apply the results of the previous section in the following situation.
Suppose that $\varphi:A \to B$ is a map of commutative non-positive DG-rings which is of flat dimension $0$.
As noted above, it give rise to a retraction of DG-rings
\begin{equation}\label{eqn:retractDiagonal}
B \to B\otimes^{\mrm{L}}_A B \to B
\end{equation}
Moreover, note that the assumption that $\varphi$ is of flat dimension $0$ implies by Proposition \ref{prop:flat-base-change} that the map $B \to B\otimes^{\mrm{L}}_A B$ is of flat dimension $0$.

The next definition is due to Kontsevich:
\begin{dfn}
A map $A \to B$ of commutative DG-rings is called homologically smooth 
if it makes $B$ into a perfect object of $\cat{D}(B\otimes^{\mrm{L}}_A B)$.
\end{dfn}

\begin{cor}\label{cor:homSmooth}
Suppose that $\varphi:A \to B$ is a map of commutative non-positive DG-rings which is of flat dimension $0$.
Then $\varphi$ is homologically smooth if and only if $\mrm{H}^0(\varphi):\mrm{H}^0(A) \to \mrm{H}^0(B)$ is homologically smooth.
\end{cor}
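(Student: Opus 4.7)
The plan is to apply Theorem \ref{thm:smoothRed} directly to the retraction diagram
\[
B \to B\otimes^{\mrm{L}}_A B \to B
\]
induced by $\varphi$. First I would invoke Proposition \ref{prop:flat-base-change} to note that, since $\varphi$ has flat dimension $0$, the base change $B \to B\otimes^{\mrm{L}}_A B$ also has flat dimension $0$. Thus this retraction satisfies the hypothesis of Theorem \ref{thm:smoothRed}, which then yields the equivalence
\[
B \text{ perfect over } B\otimes^{\mrm{L}}_A B \iff \mrm{H}^0(B) \text{ perfect over } \mrm{H}^0(B\otimes^{\mrm{L}}_A B).
\]

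Next I would identify the ring $\mrm{H}^0(B\otimes^{\mrm{L}}_A B)$ with $\mrm{H}^0(B)\otimes_{\mrm{H}^0(A)} \mrm{H}^0(B)$. This is immediate from the flatness formula (\ref{eqn:flat}) applied to $M=B$, giving
\[
\mrm{H}^0(B\otimes^{\mrm{L}}_A B) \cong \mrm{H}^0(B)\otimes_{\mrm{H}^0(A)} \mrm{H}^0(B).
\]
Moreover, since $\mrm{H}^0(\varphi)$ is flat by Proposition \ref{prop:fdisflat}, the underived tensor product here agrees with the derived one, so the right hand side is also $\mrm{H}^0(B)\otimes^{\mrm{L}}_{\mrm{H}^0(A)} \mrm{H}^0(B)$. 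Therefore the second condition above is precisely the statement that $\mrm{H}^0(\varphi)$ is homologically smooth, and the corollary follows.

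There is no real obstacle: all the substantive work has been done in Theorem \ref{thm:smoothRed} and the basic properties of flat-dimension-$0$ maps recorded in Section \ref{sec:fdz}. The only point that deserves a brief verification is the identification of $\mrm{H}^0(B\otimes^{\mrm{L}}_A B)$ with the ordinary tensor product $\mrm{H}^0(B)\otimes_{\mrm{H}^0(A)}\mrm{H}^0(B)$, which ensures that the resulting condition matches the standard definition of homological smoothness for the map of ordinary rings $\mrm{H}^0(\varphi)$.
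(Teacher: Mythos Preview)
Your proposal is correct and follows exactly the paper's approach: apply Theorem~\ref{thm:smoothRed} to the retraction $B \to B\otimes^{\mrm{L}}_A B \to B$, after noting via Proposition~\ref{prop:flat-base-change} that the first map has flat dimension $0$. Your additional remarks identifying $\mrm{H}^0(B\otimes^{\mrm{L}}_A B)$ with $\mrm{H}^0(B)\otimes^{\mrm{L}}_{\mrm{H}^0(A)}\mrm{H}^0(B)$ (via (\ref{eqn:flat}) and Proposition~\ref{prop:fdisflat}) make explicit what the paper leaves implicit, but this is elaboration rather than a different argument.
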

\begin{proof}
This follows from applying Theorem \ref{thm:smoothRed} to (\ref{eqn:retractDiagonal}).
\end{proof}

Recall that a if $\k$ is a field, and $A$ is a commutative noetherian $\k$-algebra,
then $A$ is called geometrically regular if $L \otimes_{\k} A$ is a regular ring for any finite field extension $\k \subseteq L$.
A map $A \to B$ between commutative noetherian rings is called regular if it is flat and all of its fibers are geometrically regular.

\begin{cor}\label{cor:homRegular}
Suppose that $\varphi:A \to B$ is a map of commutative non-positive DG-rings which is of flat dimension $0$.
Assume further that $\mrm{H}^0(A)$, $\mrm{H}^0(B)$ and $\mrm{H}^0(B)\otimes_{\mrm{H}^0(A)} H^0(B)$ are noetherian rings.
Then $\varphi$ is homologically smooth if and only if the map $\mrm{H}^0(\varphi):\mrm{H}^0(A) \to \mrm{H}^0(B)$ is regular.
\end{cor}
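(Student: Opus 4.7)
The plan is to reduce the corollary to a classical statement in commutative algebra by invoking the result just established. By Corollary \ref{cor:homSmooth}, $\varphi$ is homologically smooth if and only if $\bar{\varphi} := \mrm{H}^0(\varphi)$ is homologically smooth, and by Proposition \ref{prop:fdisflat} the map $\bar{\varphi}$ is flat. Thus the task reduces to the following purely classical equivalence: for a flat map $R \to S$ of noetherian commutative rings with $S \otimes_R S$ noetherian, $S$ is a perfect $S \otimes_R S$-module if and only if $R \to S$ is regular.

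For the forward direction, I would use that perfectness is preserved under arbitrary base change. Base changing along $R \to L$ for $L$ an arbitrary finite extension of a residue field $k(\bar{\p})$, $\bar{\p} \in \opn{Spec}(R)$, the $L$-algebra $T := L \otimes_R S$ becomes perfect over $T \otimes_L T$. It then suffices to show: a noetherian $L$-algebra $T$ with $T \otimes_L T$ noetherian, perfect over $T \otimes_L T$, is regular. A change-of-rings spectral sequence translates finite projective dimension of $T$ over $T \otimes_L T$ into finite global dimension of $T$, yielding regularity by Auslander-Buchsbaum-Serre. Running this for every finite extension $L/k(\bar{\p})$ then gives geometric regularity of each fiber of $\bar{\varphi}$.

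For the backward direction, regularity of $\bar{\varphi}$ together with the noetherian hypothesis on $S \otimes_R S$ should imply, locally on $\opn{Spec}(S)$, that the diagonal ideal $I = \ker(S \otimes_R S \to S)$ is generated by a regular sequence. The resulting finite Koszul resolution of $S$ over $S \otimes_R S$ is visibly perfect. The main obstacle lies here: general regular homomorphisms need not be smooth but only filtered colimits of smooth ones (by Popescu's theorem), so one cannot directly invoke the Koszul resolution from the smooth case. The noetherian hypothesis on $S \otimes_R S$ is precisely what forces the diagonal to be locally a regular closed immersion, so that the Koszul argument applies and exhibits $S$ as a perfect $S \otimes_R S$-module.
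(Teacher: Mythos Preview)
Your reduction step is exactly what the paper does: invoke Corollary~\ref{cor:homSmooth} (and implicitly Proposition~\ref{prop:fdisflat}) to reduce to the classical statement that a flat map $R\to S$ of commutative noetherian rings with $S\otimes_R S$ noetherian is homologically smooth if and only if it is regular. The paper, however, does not reprove this classical equivalence; it simply cites Rodicio \cite[Theorem~1]{Ro}.

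Your attempt to supply an independent proof of the classical statement has a real gap in the backward direction. You assert that ``the noetherian hypothesis on $S\otimes_R S$ is precisely what forces the diagonal to be locally a regular closed immersion,'' but you give no argument for this, and it is precisely the non-trivial content. Knowing that $R\to S$ has geometrically regular fibers does not, by any elementary mechanism, produce a regular sequence generating $\ker(S\otimes_R S\to S)$ locally; the passage from regularity of fibers to finite projective dimension of $S$ over $S\otimes_R S$ is the substance of Rodicio's theorem (proved there via vanishing of Hochschild and Andr\'e--Quillen homology). Invoking Popescu does not help either: writing $S$ as a filtered colimit of smooth $R$-algebras $S_i$ gives Koszul resolutions over each $S_i\otimes_R S_i$, but their lengths need not be bounded, so the colimit need not be perfect without further input. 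Your forward direction sketch is closer to correct, though the base-change step deserves care (one should check that $T\otimes_L T \cong L\otimes_R(S\otimes_R S)$ and that the derived base change of $S$ along $S\otimes_R S\to T\otimes_L T$ really yields $T$).

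In short: the reduction is right and matches the paper; for the remaining classical fact, either cite \cite{Ro} as the paper does, or be prepared to reproduce a genuine argument (e.g.\ via Andr\'e--Quillen homology) for the implication regular $\Rightarrow$ diagonal locally complete intersection.
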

\begin{proof}
This follows from Corollary \ref{cor:homSmooth} and the fact that a flat map between commutative noetherian rings is homologically smooth if and only if it is regular, a result proved in \cite[Theorem 1]{Ro}.
\end{proof}

A map $A \to B$ between commutative noetherian rings is called essentially of finite type if it is a localization of a finite type map.

\begin{cor}\label{cor:smoothtohzsmooth}
Suppose that $\varphi:A \to B$ is a map of commutative non-positive DG-rings which is of flat dimension $0$.
Assume further that $\mrm{H}^0(A)$ and $\mrm{H}^0(B)$ are noetherian rings,
and that $\mrm{H}^0(\varphi):\mrm{H}^0(A) \to \mrm{H}^0(B)$ is essentially of finite type.
Then $\varphi$ is homologically smooth if and only if the map $\mrm{H}^0(\varphi):\mrm{H}^0(A) \to \mrm{H}^0(B)$ is smooth.
\end{cor}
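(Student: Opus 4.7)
The plan is to reduce to Corollary \ref{cor:homRegular} and then invoke a standard characterization of smoothness from commutative algebra.

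First, I would verify the extra noetherian hypothesis of Corollary \ref{cor:homRegular}, namely that $\mrm{H}^0(B) \otimes_{\mrm{H}^0(A)} \mrm{H}^0(B)$ is noetherian. By assumption $\mrm{H}^0(\varphi)$ is essentially of finite type, so by base change along itself, the map $\mrm{H}^0(B) \to \mrm{H}^0(B)\otimes_{\mrm{H}^0(A)}\mrm{H}^0(B)$ is also essentially of finite type. Since $\mrm{H}^0(B)$ is noetherian, this forces $\mrm{H}^0(B)\otimes_{\mrm{H}^0(A)}\mrm{H}^0(B)$ to be noetherian as well. Hence Corollary \ref{cor:homRegular} applies, and $\varphi$ is homologically smooth if and only if $\mrm{H}^0(\varphi):\mrm{H}^0(A)\to\mrm{H}^0(B)$ is a regular homomorphism.

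Next I would appeal to the classical fact from commutative algebra that, for a map between noetherian rings which is essentially of finite type, smoothness is equivalent to being flat with geometrically regular fibers, i.e. to being regular and essentially of finite type. By Proposition \ref{prop:fdisflat}, the map $\mrm{H}^0(\varphi)$ is automatically flat since $\varphi$ has flat dimension $0$, and by hypothesis it is essentially of finite type. Combining these two reductions yields the equivalence: $\varphi$ is homologically smooth iff $\mrm{H}^0(\varphi)$ is regular iff $\mrm{H}^0(\varphi)$ is smooth.

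The argument is almost entirely a bookkeeping exercise that strings together Corollary \ref{cor:homRegular} with a well-known characterization of smoothness, so there is no substantive obstacle; the only point requiring a tiny verification is the noetherianity of the tensor product, which follows immediately from the essentially of finite type hypothesis.
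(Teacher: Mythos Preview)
Your argument is correct and follows essentially the same route as the paper: reduce to Corollary \ref{cor:homRegular} and then use that for essentially of finite type maps between noetherian rings, regularity and smoothness coincide (the paper cites \cite[Theorem 1.1]{AI} for this). Your explicit verification that $\mrm{H}^0(B)\otimes_{\mrm{H}^0(A)}\mrm{H}^0(B)$ is noetherian is a detail the paper leaves implicit, and the appeal to Proposition \ref{prop:fdisflat} is harmless though redundant, since regularity already includes flatness by definition.
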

\begin{proof}
This follows from Corollary \ref{cor:homRegular} and the fact that for essentially finite type maps, being regular is the same as being smooth. See \cite[Theorem 1.1]{AI}.
\end{proof}

\begin{rem}
A map $\varphi:A \to B$ which is of flat dimension $0$,
and such that the induced map $\mrm{H}^0(\varphi):\mrm{H}^0(A) \to \mrm{H}^0(B)$ is smooth,
is called a smooth map in \cite[Section 2.2.2]{TV} and \cite[Section 2.1.4]{GR}.
We thus see that the maps that are called smooth in derived algebraic geometry are exactly the homologically smooth maps of flat dimension $0$.
\end{rem}

\begin{cor}\label{cor:lci}
Suppose that $\varphi:A \to B$ is a map of commutative non-positive DG-rings which is of flat dimension $0$.
Assume further that $\mrm{H}^0(A)$ and $\mrm{H}^0(B)$ are noetherian rings,
and that $\mrm{H}^0(\varphi):\mrm{H}^0(A) \to \mrm{H}^0(B)$ is essentially of finite type.
Then the following are equivalent:
\begin{enumerate}
\item The map $\varphi$ is smooth.
\item The induced map $B\otimes^{\mrm{L}}_A B \to B$ is a local complete intersection:
for any $\bar{\q} \in \opn{Spec}(\mrm{H}^0(B))$,
letting $\bar{\p} \in \opn{Spec}(\mrm{H}^0(B\otimes^{\mrm{L}}_A B))$ be its preimage along the diagonal map,
there exist $\bar{a}_1,\dots,\bar{a}_n \in \mrm{H}^0((B\otimes^{\mrm{L}}_A B)_{\bar{\p}})$, 
such that the composition
\[
B_{\bar{\q}} \xrightarrow{\varphi_{\bar{\p}}} (B\otimes^{\mrm{L}}_A B)_{\bar{\p}} \xrightarrow{\kappa} K((B\otimes^{\mrm{L}}_A B)_{\bar{\p}};\bar{a}_1,\dots,\bar{a}_n)
\]
is a quasi-isomorphism.
\end{enumerate}
If moreover the DG-rings $A,B$ are noetherian and have bounded cohomology,
then the sequence $\bar{a}_1,\dots,\bar{a}_n \in \mrm{H}^0((B\otimes^{\mrm{L}}_A B)_{\bar{\p}})$ is a $(B\otimes^{\mrm{L}}_A B)_{\bar{\p}}$-regular sequence.
\end{cor}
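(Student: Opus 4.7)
The plan is to deduce the corollary by applying the general framework of Section \ref{sec:retract} to the specific DG-ring retraction
\[
B \to B\otimes^{\mrm{L}}_A B \xrightarrow{\Delta} B
\]
arising from the map $\varphi:A \to B$. First I would verify the hypotheses of Theorem \ref{thm:lciRed} for this retraction. By Proposition \ref{prop:flat-base-change}, the map $B \to B\otimes^{\mrm{L}}_A B$, obtained by base-changing $\varphi$ along itself, is again of flat dimension $0$. The ring $\mrm{H}^0(B)$ is noetherian by assumption, and, using the identification $\mrm{H}^0(B\otimes^{\mrm{L}}_A B) \cong \mrm{H}^0(B) \otimes_{\mrm{H}^0(A)} \mrm{H}^0(B)$ coming from (\ref{eqn:flat}), this ring is also noetherian as $\mrm{H}^0(\varphi)$ is essentially of finite type.

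Next, I would observe that condition (2) of the corollary, with its explicit composition through $\kappa \circ \varphi_{\bar{\p}}$, is word-for-word condition (2) of Theorem \ref{thm:lciRed} applied to the retraction above. Thus Theorem \ref{thm:lciRed} gives that condition (2) is equivalent to $\mrm{H}^0(\Delta): \mrm{H}^0(B)\otimes_{\mrm{H}^0(A)} \mrm{H}^0(B) \to \mrm{H}^0(B)$ being a local complete intersection of ordinary noetherian rings. On the other hand, by Corollary \ref{cor:smoothtohzsmooth}, condition (1) is equivalent to $\mrm{H}^0(\varphi)$ being smooth. The proof thus reduces to the classical commutative algebra fact that for an essentially of finite type map of noetherian rings, smoothness is equivalent to the diagonal being a locally complete intersection map; this is standard (a smooth map locally factors through a polynomial algebra, in which case the diagonal is cut out by the regular sequence $x_i \otimes 1 - 1\otimes x_i$, and conversely an lci diagonal forces $\Omega^1_{\mrm{H}^0(B)/\mrm{H}^0(A)}$ to be projective of the right rank).

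Finally, for the regularity assertion when $A$ and $B$ are noetherian with bounded cohomology, I would apply Proposition \ref{prop:isRegular} to the same retraction. This requires checking that $B\otimes^{\mrm{L}}_A B$ is itself noetherian with bounded cohomology. Using (\ref{eqn:flat}) once more, we have $\mrm{H}^n(B\otimes^{\mrm{L}}_A B) \cong \mrm{H}^0(B) \otimes_{\mrm{H}^0(A)} \mrm{H}^n(B)$, so boundedness of the cohomology of $B$ gives boundedness of the cohomology of $B\otimes^{\mrm{L}}_A B$, and finite generation of each $\mrm{H}^n(B)$ over $\mrm{H}^0(B)$ gives finite generation of $\mrm{H}^n(B\otimes^{\mrm{L}}_A B)$ over the (noetherian) ring $\mrm{H}^0(B\otimes^{\mrm{L}}_A B)$.

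I expect the only real content to be bookkeeping: carefully identifying the general retraction $A \to B \to A$ of Section \ref{sec:retract} with the specific diagonal retraction here (the roles of $A$ and $B$ are swapped), and tracking that the hypotheses transport correctly. The nontrivial input from outside the paper is the classical equivalence between smoothness and lci diagonal for essentially finite type ring maps, which is already invoked implicitly in Corollary \ref{cor:smoothtohzsmooth} via \cite{AI} and \cite{Ro}; everything else is a direct appeal to the results of the previous section.
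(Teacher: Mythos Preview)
Your proposal is correct and follows essentially the same route as the paper's proof: verify the hypotheses of Theorem \ref{thm:lciRed} for the diagonal retraction, reduce via Corollary \ref{cor:smoothtohzsmooth} to the classical equivalence between smoothness of $\mrm{H}^0(\varphi)$ and the diagonal $\mrm{H}^0(\Delta)$ being a local complete intersection (which the paper cites as \cite[Theorem 1.1]{AI}), and then invoke Proposition \ref{prop:isRegular} for the regularity addendum after checking $B\otimes^{\mrm{L}}_A B$ is noetherian with bounded cohomology. Your write-up is in fact more explicit than the paper's about checking the noetherian and boundedness hypotheses along the way, but the architecture is identical.
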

\begin{proof}
Let us denote the map $B\otimes^{\mrm{L}}_A B \to B$ by $\Delta$.
By Corollary \ref{cor:smoothtohzsmooth} the map $\varphi$ is smooth if and only if the map $\mrm{H}^0(\varphi)$ is smooth,
and by \cite[Theorem 1.1]{AI}, this is equivalent to the map 
\[
\mrm{H}^0(\Delta):\mrm{H}^0(B)\otimes_{\mrm{H}^0(A)} \mrm{H}^0(B) \to \mrm{H}^0(B)
\]
being a local complete intersection.
By Theorem \ref{thm:lciRed},
this is equivalent to (2) above.
Under the additional assumption that $A,B$ are noetherian and with bounded cohomology,
our assumptions imply that $B\otimes^{\mrm{L}}_A B$ is also noetherian and has bounded cohomology,
so $(B\otimes^{\mrm{L}}_A B)_{\bar{\p}}$-regularity of $\bar{a}_1,\dots,\bar{a}_n$ follows from Proposition \ref{prop:isRegular}.
\end{proof}

Next we have the following version of Van den Bergh duality (\cite[Theorem 1]{VdB}):
\begin{cor}\label{cor:VDB}
Suppose that $\varphi:A \to B$ is a map of commutative non-positive DG-rings which is of flat dimension $0$.
Assume further that $\mrm{H}^0(A)$ and $\mrm{H}^0(B)$ are noetherian rings,
and that $\mrm{H}^0(\varphi):\mrm{H}^0(A) \to \mrm{H}^0(B)$ is essentially of finite type.
If $\varphi$ is smooth then the following holds:
\begin{enumerate}
\item Letting $M:= \mrm{R}\opn{Hom}_{B\otimes^{\mrm{L}}_A B}(B,B\otimes^{\mrm{L}}_A B) \in \cat{D}(B)$,
we have that $M$ is an invertible DG-module,
and for any $X \in \cat{D}(B\otimes^{\mrm{L}}_A B)$,
there are natural isomorphisms
\[
\mrm{R}\opn{Hom}_{B\otimes^{\mrm{L}}_A B}(B,X) \cong M\otimes^{\mrm{L}}_B (B\otimes^{\mrm{L}}_{B\otimes^{\mrm{L}}_A B} X)
\]
and
\[
M^{-1}\otimes^{\mrm{L}}_B \mrm{R}\opn{Hom}_{B\otimes^{\mrm{L}}_A B}(B,X) \cong  B\otimes^{\mrm{L}}_{B\otimes^{\mrm{L}}_A B} X
\]
\item If moreover the noetherian ring $\mrm{H}^0(B)$ has connected spectrum,
then there exist an invertible $\mrm{H}^0(B)$-module $\bar{M}$ and an integer $n$,
such that for all $i \in \mathbb{Z}$ and all $X \in \cat{D}(B\otimes^{\mrm{L}}_A B)$ there is a natural isomorphism
\[
\opn{Ext}^i_{B\otimes^{\mrm{L}}_A B}(B,X) \cong \bar{M}\otimes_{\mrm{H}^0(B)} \opn{Tor}_{n-i}^{B\otimes^{\mrm{L}}_A B}(B,X).
\]
\end{enumerate}
\end{cor}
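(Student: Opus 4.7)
The plan is to deduce this corollary directly from Theorem \ref{thm:VDB} applied to the DG-ring retraction
\[
B \xrightarrow{\varphi'} B\otimes^{\mrm{L}}_A B \xrightarrow{\Delta} B,
\]
where $\varphi'$ is the map sending $b \mapsto b\otimes 1$ (say), and $\Delta$ is the multiplication map. So the main task is to verify the three hypotheses of Theorem \ref{thm:VDB} in this setting, and then translate its conclusion into the form required.

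First I would check the flatness hypothesis: Proposition \ref{prop:flat-base-change}, applied to the given flat $\varphi:A\to B$ and the map $A\to B$ itself, shows that $\varphi':B\to B\otimes^{\mrm{L}}_A B$ has flat dimension $0$. Second, for the noetherian hypothesis, note that $\mrm{H}^0(B)$ is noetherian by assumption, while by (\ref{eqn:flat}) applied to $\varphi$ we have $\mrm{H}^0(B\otimes^{\mrm{L}}_A B)\cong \mrm{H}^0(B)\otimes_{\mrm{H}^0(A)}\mrm{H}^0(B)$, which is essentially of finite type over $\mrm{H}^0(B)$ (and hence noetherian) because $\mrm{H}^0(\varphi)$ is essentially of finite type by hypothesis. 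Third, smoothness of $\varphi$ combined with Corollary \ref{cor:smoothtohzsmooth} gives that $\mrm{H}^0(\varphi)$ is smooth in the classical sense, whence $\mrm{H}^0(\Delta)$ is a local complete intersection by the standard characterization of smoothness (this is exactly the input used in Corollary \ref{cor:lci}). Finally, $B$ is perfect over $B\otimes^{\mrm{L}}_A B$, i.e.\ $\varphi$ is homologically smooth, by Corollary \ref{cor:homSmooth} (or equivalently by Corollary \ref{cor:smoothtohzsmooth}).

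With the hypotheses verified, Theorem \ref{thm:VDB}(1) immediately yields that $M=\mrm{R}\opn{Hom}_{B\otimes^{\mrm{L}}_A B}(B,B\otimes^{\mrm{L}}_A B)$ is an invertible DG-module in $\cat{D}(B)$. For the first displayed isomorphism in part (1) of the corollary, I would combine perfectness of $B$ over $B\otimes^{\mrm{L}}_A B$ with \cite[Theorem 14.1.22]{YeBook} to produce a natural isomorphism
\[
\mrm{R}\opn{Hom}_{B\otimes^{\mrm{L}}_A B}(B,X) \cong M \otimes^{\mrm{L}}_{B\otimes^{\mrm{L}}_A B} X,
\]
and then use associativity together with the identification $M\cong M\otimes^{\mrm{L}}_B B$ (viewing $M$ as a $B$-module via one of the two ring maps, which as shown in the proof of Theorem \ref{thm:VDB} yields the same DG-module) to rewrite the right hand side as $M\otimes^{\mrm{L}}_B(B\otimes^{\mrm{L}}_{B\otimes^{\mrm{L}}_A B} X)$. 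The second displayed isomorphism in part (1) is then obtained simply by tensoring the first with the inverse $M^{-1}$ over $B$.

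For part (2), the hypothesis that $\opn{Spec}(\mrm{H}^0(B))$ is connected allows us to invoke Theorem \ref{thm:VDB}(2), which directly yields an invertible $\mrm{H}^0(B)$-module $\bar M$ and an integer $n$ together with the desired natural isomorphism on cohomology, translating via $\opn{Ext}^i = \mrm{H}^i \mrm{R}\opn{Hom}$ and $\opn{Tor}_{n-i} = \mrm{H}^{i-n}(-\otimes^{\mrm{L}}-)$. The main subtlety I anticipate is the bookkeeping of $B$-module structures: the DG-module $M$ naturally lives in $\cat{D}(B\otimes^{\mrm{L}}_A B)$ with two a priori different $B$-actions, and I will need to note (as in the proof of Theorem \ref{thm:VDB}) that because the retraction satisfies $\Delta\circ\varphi'=1_B$, these two actions give isomorphic objects of $\cat{D}(B)$, which is what makes the decomposition $M\otimes^{\mrm{L}}_B(B\otimes^{\mrm{L}}_{B\otimes^{\mrm{L}}_A B}X)$ unambiguous.
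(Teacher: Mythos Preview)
Your proposal is correct and follows exactly the paper's approach: the paper's proof consists of the single sentence ``This follows from Theorem \ref{thm:VDB},'' and you have simply spelled out the verification of the hypotheses of Theorem \ref{thm:VDB} for the retraction $B \to B\otimes^{\mrm{L}}_A B \to B$ and the extraction of the displayed isomorphisms (which appear inside the proof of Theorem \ref{thm:VDB}(2) via \cite[Theorem 14.1.22]{YeBook}). Your observation that the two $B$-actions on $M$ agree, and hence that the factorization $M\otimes^{\mrm{L}}_{B\otimes^{\mrm{L}}_A B} X \cong M\otimes^{\mrm{L}}_B(B\otimes^{\mrm{L}}_{B\otimes^{\mrm{L}}_A B} X)$ is unambiguous, is precisely the $\psi_*(\varphi_*(N))\cong N$ computation from the proof of Theorem \ref{thm:VDB}.
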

\begin{proof}
This follows from Theorem \ref{thm:VDB}.
\end{proof}

\begin{cor}\label{cor:rigid}
Suppose that $\varphi:A \to B$ is a map of commutative non-positive DG-rings which is of flat dimension $0$.
Assume further that $\mrm{H}^0(A)$ and $\mrm{H}^0(B)$ are noetherian rings,
and that $\mrm{H}^0(\varphi):\mrm{H}^0(A) \to \mrm{H}^0(B)$ is essentially of finite type.
If $\varphi$ is smooth then there exist an invertible DG-module $N \in \cat{D}(B)$ such that
\[
\mrm{R}\opn{Hom}_{B\otimes^{\mrm{L}}_A B}(B,N\otimes^{\mrm{L}}_A N) \cong N.
\]
\end{cor}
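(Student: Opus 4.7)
The plan is to take an explicit candidate for $N$, namely the inverse of the invertible DG-module produced by Van den Bergh duality, and verify rigidity by a direct computation. Let $M := \mrm{R}\opn{Hom}_{B\otimes^{\mrm{L}}_A B}(B,B\otimes^{\mrm{L}}_A B)$; by Corollary \ref{cor:VDB}(1), $M$ is invertible in $\cat{D}(B)$. Set $N := M^{-1}$, which is again invertible.

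I would then apply the natural isomorphism of Corollary \ref{cor:VDB}(1) with the object $X := N\otimes^{\mrm{L}}_A N \in \cat{D}(B\otimes^{\mrm{L}}_A B)$ to obtain
\[
\mrm{R}\opn{Hom}_{B\otimes^{\mrm{L}}_A B}(B,N\otimes^{\mrm{L}}_A N) \cong M\otimes^{\mrm{L}}_B \bigl(B\otimes^{\mrm{L}}_{B\otimes^{\mrm{L}}_A B}(N\otimes^{\mrm{L}}_A N)\bigr).
\]
The crux of the argument is then the projection-type identification
\[
B\otimes^{\mrm{L}}_{B\otimes^{\mrm{L}}_A B}(N\otimes^{\mrm{L}}_A N) \cong N\otimes^{\mrm{L}}_B N \quad \text{in } \cat{D}(B).
\]
More generally, for any $P,Q \in \cat{D}(B)$, viewing $P\otimes^{\mrm{L}}_A Q$ as an object of $\cat{D}(B\otimes^{\mrm{L}}_A B)$ via the outer factors, I would establish $B\otimes^{\mrm{L}}_{B\otimes^{\mrm{L}}_A B}(P\otimes^{\mrm{L}}_A Q) \cong P\otimes^{\mrm{L}}_B Q$. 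This is done by taking K-flat resolutions of $P$ and $Q$ over $B$, rewriting
\[
P\otimes^{\mrm{L}}_A Q \cong P\otimes^{\mrm{L}}_B (B\otimes^{\mrm{L}}_A B)\otimes^{\mrm{L}}_B Q
\]
as $B\otimes^{\mrm{L}}_A B$-modules (where the two factors of $B\otimes^{\mrm{L}}_A B$ carry the $B\otimes^{\mrm{L}}_A B$-action), and then observing that tensoring on the left with $B$ over $B\otimes^{\mrm{L}}_A B$ collapses the middle factor to $B$.

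Once this identification is in hand, the computation finishes formally:
\[
\mrm{R}\opn{Hom}_{B\otimes^{\mrm{L}}_A B}(B,N\otimes^{\mrm{L}}_A N) \cong M\otimes^{\mrm{L}}_B N\otimes^{\mrm{L}}_B N \cong M\otimes^{\mrm{L}}_B M^{-1}\otimes^{\mrm{L}}_B M^{-1} \cong M^{-1} = N,
\]
as required. The main obstacle is the projection-formula identification, which must be carried out carefully because both $B\otimes^{\mrm{L}}_A B$-actions on $N\otimes^{\mrm{L}}_A N$ have to be tracked through the reduction; everything else follows immediately from Corollary \ref{cor:VDB}.
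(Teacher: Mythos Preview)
Your proposal is correct and follows essentially the same route as the paper: set $M=\mrm{R}\opn{Hom}_{B\otimes^{\mrm{L}}_A B}(B,B\otimes^{\mrm{L}}_A B)$, take $N=M^{-1}$, apply Corollary~\ref{cor:VDB}(1), use the reduction-to-the-diagonal isomorphism $B\otimes^{\mrm{L}}_{B\otimes^{\mrm{L}}_A B}(N\otimes^{\mrm{L}}_A N)\cong N\otimes^{\mrm{L}}_B N$, and conclude. The only difference is that the paper invokes this last isomorphism as a citation (to \cite[Equation~(3.11.2)]{AILN}) rather than sketching it, as you do.
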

\begin{proof}
Let $M:= \mrm{R}\opn{Hom}_{B\otimes^{\mrm{L}}_A B}(B,B\otimes^{\mrm{L}}_A B)$.
By Corollary \ref{cor:VDB}, we know that $M$ is invertible over $B$.
Let $N = M^{-1}$, so $N$ is also invertible over $B$.
Then by Corollary \ref{cor:VDB} we have that:
\[
\mrm{R}\opn{Hom}_{B\otimes^{\mrm{L}}_A B}(B,N\otimes^{\mrm{L}}_A N) \cong M\otimes^{\mrm{L}}_B \left(B\otimes^{\mrm{L}}_{B\otimes^{\mrm{L}}_A B} (N\otimes^{\mrm{L}}_A N)\right).
\]
The reduction to the diagonal isomorphism (see for instance \cite[Equation (3.11.2)]{AILN}) says that
\[
B\otimes^{\mrm{L}}_{B\otimes^{\mrm{L}}_A B} (N\otimes^{\mrm{L}}_A N) \cong N\otimes^{\mrm{L}}_B N.
\]
This implies that
\[
\mrm{R}\opn{Hom}_{B\otimes^{\mrm{L}}_A B}(B,N\otimes^{\mrm{L}}_A N) \cong M\otimes^{\mrm{L}}_B N\otimes^{\mrm{L}}_B N \cong N,
\]
proving the claim.
\end{proof}

\begin{rem}
Following \cite[Definition 9.2]{Ye1},
given a map of commutative DG-rings $A\to B$,
a DG-module $N \in \cat{D}(B)$ together with an isomorphism
\[
\rho:N \to \mrm{R}\opn{Hom}_{B\otimes^{\mrm{L}}_A B}(B,N\otimes^{\mrm{L}}_A N)
\]
is called a rigid DG-module over $B$ relative to $A$.
Thus, the above corollary states that in the above smooth situation, 
there exist an invertible DG-module over $B$ which is rigid over $B$ relative to $A$.
In this smooth context, one may view this result as a generalization of similar results from \cite{AIL,Sh,YeSquare,YZ}.
\end{rem}

\section{Homological smoothness and flatness over a base ring}\label{sec:smooth-hz-flat}

In this final section we study homologically smooth commutative DG-rings $A$ over a base ring $\k$.
Here, since the base $\k$ is a ring, it makes no sense to impose a flat dimension $0$ assumption,
as this would imply that $\amp(A) = 0$.
Instead, we will assume that the induced map $\k \to \mrm{H}^0(A)$ is flat.
This is the case, for instance, if $\k$ is a field.
Our main result below shows that under a noetherian assumption,
commutative DG-rings which are not equivalent to a ring are never homologically smooth over a field.

\begin{thm}\label{thm:flathz}
Let $\k$ be a commutative noetherian ring,
and let $A$ be a non-positive commutative noetherian DG-ring over $\k$.
Assume that:
\begin{enumerate}
\item The map $\k \to A$ is homologically smooth.
\item The DG-ring $A$ is noetherian, it has bounded cohomology, and the ring $A^0$ is noetherian.
\item The map $\k \to \mrm{H}^0(A)$ obtained from the composition $\k \to A \to \mrm{H}^0(A)$ is flat. 
\end{enumerate}
Then the natural map $A \to \mrm{H}^0(A)$ is a quasi-isomorphism of DG-rings.
\end{thm}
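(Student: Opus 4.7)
The strategy is to combine Proposition \ref{prop:compact}, applied to $A$ viewed as a DG-module over $A^e := A\otimes^{\mrm{L}}_\k A$, with the reduction to the diagonal formula, to obtain a perfection statement for $\mrm{H}^0(A)\otimes^{\mrm{L}}_A \mrm{H}^0(A)$ that will prove incompatible with $\mrm{H}^{<0}(A) \ne 0$.

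Set $R := \mrm{H}^0(A)$ and $T := R\otimes_\k R$. The flatness of $R$ over $\k$ (hypothesis (3)) implies, via a K\"unneth argument, that $\mrm{H}^0(A^e) = T$. Since $A$ is perfect over $A^e$ by hypothesis (1), Proposition \ref{prop:compact} shows that $A\otimes^{\mrm{L}}_{A^e} T$ is perfect over $T$. The reduction to the diagonal formula (\cite[Equation (3.11.2)]{AILN}) applied to the DG-$A$-module $R$, combined with the identification $R\otimes^{\mrm{L}}_\k R \cong T$ (again by $\k$-flatness of $R$), yields an isomorphism
\[
A\otimes^{\mrm{L}}_{A^e} T \;\cong\; R\otimes^{\mrm{L}}_A R.
\]
Therefore $R\otimes^{\mrm{L}}_A R$ is perfect over $T$; in particular, it has bounded cohomology, whether regarded as a DG-$T$-module or, via either factor, as a DG-$R$-module.

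Assume for contradiction that $\mrm{H}^{-n}(A)\ne 0$ for some $n>0$. Since $\mrm{H}^{-n}(A)$ is a nonzero finitely generated $R$-module, pick a maximal ideal $\bar{\m}\subseteq R$ in its support; localization preserves all the above and yields $R_{\bar{\m}}\otimes^{\mrm{L}}_{A_{\bar{\m}}} R_{\bar{\m}}$ bounded, while $\mrm{H}^{-n}(A_{\bar{\m}})\ne 0$.

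The main technical obstacle is now to show that this boundedness, combined with $\mrm{H}^{-n}(A_{\bar{\m}})\ne 0$, is self-contradictory. Building a minimal semi-free DG-resolution $P\to R_{\bar{\m}}$ over $A_{\bar{\m}}$ --- iteratively adjoining free generators in progressively lower degrees to kill the cohomology of the cone --- the DG-algebra structure of $A_{\bar{\m}}$ guarantees (in Tate fashion) that each new generator's differential, multiplied by cocycles of $A_{\bar{\m}}$ in negative degree, produces further cycles that themselves require new generators; this process does not terminate when $\mrm{H}^{-n}(A_{\bar{\m}})\ne 0$. Hence $P$ has nonzero free generators in arbitrarily negative degrees, and by minimality together with a Nakayama-and-universal-coefficient argument modulo $\bar{\m}R_{\bar{\m}}$, the complex $P\otimes_{A_{\bar{\m}}} R_{\bar{\m}}$ (which computes $R_{\bar{\m}}\otimes^{\mrm{L}}_{A_{\bar{\m}}} R_{\bar{\m}}$) has nonzero cohomology in infinitely many degrees. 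This contradicts the boundedness from the previous step, so $\mrm{H}^{<0}(A) = 0$ and the natural map $A\to \mrm{H}^0(A)$ is a quasi-isomorphism.
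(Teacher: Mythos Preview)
Your reduction step is exactly right and matches the paper: via Proposition~\ref{prop:compact}, the K\"unneth isomorphism, flatness of $R:=\mrm{H}^0(A)$ over $\k$, and the reduction-to-the-diagonal formula, one obtains that $R\otimes^{\mrm{L}}_A R$ is perfect over $T:=R\otimes_\k R$.

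The gap is in the second half. You only extract from perfection over $T$ that $R\otimes^{\mrm{L}}_A R$ is cohomologically bounded, and then attempt to contradict $\mrm{H}^{<0}(A)\ne 0$ by a direct minimal semi-free resolution argument. But the specific mechanism you invoke is not correct: if $e$ is a generator of $P$ and $a\in A^{-n}$ is a cocycle, then $a\cdot de$ is indeed a cycle, but it is always the boundary $\pm d(a\cdot e)$, so it never ``requires a new generator''. What you are really trying to show is that $R$ is not perfect over $A$ when $\mrm{H}^{<0}(A)\ne 0$; this is precisely J{\o}rgensen's amplitude inequality (or \cite[Theorem~7.21]{Ye1}), and the sketch you give does not amount to a proof of it.

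The paper's route from this point is both shorter and complete. From perfection of $R\otimes^{\mrm{L}}_A R$ over $T$ one gets finite flat dimension over $T$; since $T$ is flat over $R$ (by flatness of $R$ over $\k$), it follows that $R\otimes^{\mrm{L}}_A R$ has finite flat dimension over $R$. Together with the fact that its cohomologies are finitely generated over the noetherian ring $R$, this gives that $R\otimes^{\mrm{L}}_A R$ is perfect over $R$, and hence, by Proposition~\ref{prop:compact} applied in the other direction, that $R$ is perfect over $A$. One then invokes \cite[Theorem~7.21]{Ye1} (or \cite[Theorem~0.2]{Jo} in the local case) to conclude that $A\to R$ is a quasi-isomorphism. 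If you want to avoid citing that theorem, you would need to supply a genuine proof of it; the hand-wave does not suffice.
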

\begin{proof}
The assumption that $A$ is homologically smooth over $\k$ means that
$A$ is a perfect DG-module over $A\otimes^{\mrm{L}}_{\k} A$.
By Proposition \ref{prop:compact},
this is equivalent to the fact that
\[
A \otimes^{\mrm{L}}_{A\otimes^{\mrm{L}}_{\k} A} \mrm{H}^0(A\otimes^{\mrm{L}}_{\k} A)
\]
is a perfect complex over the ring $\mrm{H}^0(A\otimes^{\mrm{L}}_{\k} A)$.
Since $A$ is non-positive, by the K\"unneth Trick (\cite[Lemma 13.1.36]{YeBook}),
we have that 
\[
\mrm{H}^0(A\otimes^{\mrm{L}}_{\k} A) \cong \mrm{H}^0(A)\otimes_{\k} \mrm{H}^0(A),
\]
and since we assumed that $\mrm{H}^0(A)$ is flat over $\k$,
this is isomorphic to $\mrm{H}^0(A)\otimes^{\mrm{L}}_{\k} \mrm{H}^0(A)$.
Thus, we deduce that the complex
\[
A \otimes^{\mrm{L}}_{A\otimes^{\mrm{L}}_{\k} A} \left(\mrm{H}^0(A)\otimes^{\mrm{L}}_{\k} \mrm{H}^0(A)\right)
\]
is a perfect complex over the ring $\mrm{H}^0(A)\otimes_{\k} \mrm{H}^0(A)$.
By the reduction to the diagonal isomorphism (\cite[Equation (3.11.2)]{AILN} or \cite[Proposition 5.3]{Sh}),
we may compute the above derived Hochschild homology,
and obtain a natural isomorphism
\[
A \otimes^{\mrm{L}}_{A\otimes^{\mrm{L}}_{\k} A} \left(\mrm{H}^0(A)\otimes^{\mrm{L}}_{\k} \mrm{H}^0(A)\right)
\cong \mrm{H}^0(A)\otimes^{\mrm{L}}_A \mrm{H}^0(A)
\]
in $\cat{D}(\mrm{H}^0(A)\otimes_{\k} \mrm{H}^0(A))$.
In particular,
the complex $\mrm{H}^0(A)\otimes^{\mrm{L}}_A \mrm{H}^0(A)$ has finite flat dimension over $\mrm{H}^0(A)\otimes_{\k} \mrm{H}^0(A)$.
Since $\mrm{H}^0(A)$ is flat over $\k$,
we know that $\mrm{H}^0(A)\otimes_{\k} \mrm{H}^0(A)$ is flat over $\mrm{H}^0(A)$,
so we may deduce from the above that the complex $\mrm{H}^0(A)\otimes^{\mrm{L}}_A \mrm{H}^0(A)$ has finite flat dimension over $\mrm{H}^0(A)$.
But it also has finitely generated cohomology over $\mrm{H}^0(A)$,
and $\mrm{H}^0(A)$ is a noetherian ring,
so we may deduce that $\mrm{H}^0(A)\otimes^{\mrm{L}}_A \mrm{H}^0(A)$ is a perfect complex over $\mrm{H}^0(A)$.
Applying Proposition \ref{prop:compact} again,
we deduce that $\mrm{H}^0(A)$ is a perfect DG-module over $A$.
By \cite[Theorem 7.21]{Ye1} (or \cite[Theorem 0.2]{Jo} in case $\mrm{H}^0(A)$ is local),
this implies that the map $A \to \mrm{H}^0(A)$ is a quasi-isomorphism.
\end{proof}

\begin{cor}
Assume the conditions of the above theorem,
and assume further that the ring $\k$ is a regular ring.
Then $A \cong \mrm{H}^0(A)$ is also a regular ring.
\end{cor}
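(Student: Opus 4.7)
The plan is to combine Theorem \ref{thm:flathz} with a classical result characterizing regular maps between ordinary noetherian rings. The work is essentially already done by the theorem; what remains is a straightforward transfer of hypotheses plus a standard fact from commutative algebra.

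First, I would invoke Theorem \ref{thm:flathz} directly: under the given hypotheses, the canonical map $A \to \mrm{H}^0(A)$ is a quasi-isomorphism of DG-rings. Hence $A$ is quasi-isomorphic to the ordinary commutative ring $R := \mrm{H}^0(A)$, and the question reduces to showing that $R$ itself is regular. This reduction is the crucial step, but it is already supplied by the theorem.

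Next, I would verify that the map $\k \to R$ inherits the relevant structural properties. By hypothesis (3) of the theorem, $R$ is flat over $\k$, and by hypothesis, $R$ is noetherian. Since $A \simeq R$ in the homotopy category of commutative DG-rings, there is a quasi-isomorphism $A \otimes^{\mrm{L}}_{\k} A \simeq R \otimes^{\mrm{L}}_{\k} R$, and by flatness of $R$ over $\k$ this latter complex is just the ordinary tensor product $R \otimes_{\k} R$. Homological smoothness of $\k \to A$ therefore descends to the statement that $R$ is a perfect module over the ordinary ring $R \otimes_{\k} R$, i.e., $\k \to R$ is homologically smooth in the classical sense. Applying the theorem of Rodicio \cite{Ro} cited in the proof of Corollary \ref{cor:homRegular}, a flat homologically smooth map of noetherian rings is regular, so $\k \to R$ is flat with geometrically regular fibers.

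Finally, I would conclude by combining regularity of $\k$ with regularity of the fibers. For any $\p \in \opn{Spec}(R)$ lying over $\q \in \opn{Spec}(\k)$, the map $\k_{\q} \to R_{\p}$ is flat and local; by the standard ascent result for regularity along flat local maps (the local ring $R_{\p}$ is regular iff both $\k_{\q}$ and the closed fiber $R_{\p}/\q R_{\p}$ are regular), and since $\k_{\q}$ is regular by assumption while the fiber $R_{\p}/\q R_{\p}$ is regular by the regularity of $\k \to R$, it follows that every $R_{\p}$ is regular, so $R$ is regular.

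The main possible friction is ensuring that the Rodicio result applies without needing $R \otimes_{\k} R$ to be noetherian (which is not built into the hypotheses); this is covered by the formulation in \cite{Ro}, which requires only that the base and the algebra be noetherian. Aside from this minor point, the argument is essentially a formality after Theorem \ref{thm:flathz}.
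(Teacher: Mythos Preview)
Your argument is correct and follows essentially the same route as the paper: reduce to the ordinary ring $R=\mrm{H}^0(A)$ via Theorem \ref{thm:flathz}, apply Rodicio's theorem \cite{Ro} to conclude that $\k\to R$ is a regular morphism, and then use that regularity ascends along regular morphisms from a regular base. The paper's proof simply states these steps more tersely, taking for granted both the descent of homological smoothness along the quasi-isomorphism $A\simeq R$ and the final ascent of regularity that you spell out locally.
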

\begin{proof}
We may replace $A$ by $\mrm{H}^0(A)$ and assume that $A$ is a ring.
We know that $A$ is flat and homologically smooth over $\k$,
which implies by \cite[Theorem 1]{Ro} that the map $\k \to A$ is a regular morphism.
Since $\k$ is a regular ring, 
this implies that $A$ is a regular ring.
\end{proof}

\begin{cor}
Let $\k$ be a field
and let $A$ be a non-positive homologically smooth commutative noetherian DG-ring over $\k$,
such that $A$ has bounded cohomology and $A^0$ is a noetherian ring.
Then $A$ is quasi-isomorphic to a regular ring.
\end{cor}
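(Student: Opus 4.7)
The plan is to simply invoke the two results that immediately precede this statement, namely Theorem \ref{thm:flathz} and its first corollary, after checking that all hypotheses are met in this specialized setting where the base ring is a field.

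First I would verify the three hypotheses of Theorem \ref{thm:flathz}. Condition (1), that $\k \to A$ is homologically smooth, is given. Condition (2), that $A$ is noetherian with bounded cohomology and that $A^0$ is noetherian, is also given verbatim in the hypotheses. For condition (3), I need the composition $\k \to \mrm{H}^0(A)$ to be flat; but since $\k$ is a field, every $\k$-module is flat, so this is automatic. Theorem \ref{thm:flathz} then yields that the truncation map $A \to \mrm{H}^0(A)$ is a quasi-isomorphism of DG-rings.

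Next, I observe that a field is trivially a regular ring, so the hypotheses of the preceding corollary are satisfied. Applying that corollary gives that $A \cong \mrm{H}^0(A)$ is a regular commutative ring, which is exactly the assertion. Alternatively, one can inline the second step directly: once $A \simeq \mrm{H}^0(A)$, the induced map $\k \to \mrm{H}^0(A)$ is flat (again because $\k$ is a field) and homologically smooth, hence regular by \cite[Theorem 1]{Ro}, and regularity of the source (a field) transfers to regularity of the target.

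There is no real obstacle here; the content of the statement is already contained in Theorem \ref{thm:flathz} plus the previous corollary, and the only thing to remark is that passing from a regular base to a field eliminates the flatness hypothesis on $\k \to \mrm{H}^0(A)$ as a separate condition, which is precisely what makes this formulation clean.
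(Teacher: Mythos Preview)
Your proposal is correct and matches the paper's approach exactly: the corollary is stated without proof in the paper precisely because it follows immediately from Theorem~\ref{thm:flathz} and the preceding corollary once one notes that a field is regular and that every module over a field is flat, which is what you spell out.
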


Using the generic flatness theorem, we may deduce from this that noetherian DG-rings that are homologically smooth over a integral domain are quasi-isomorphic to a ring on a non-empty open set.

\begin{cor}
Let $\k$ be a noetherian integral domain,
and let $A$ be a noetherian DG-ring over $\k$ with bounded cohomology such that $A^0$ is a noetherian ring.
Assume that $A$ is homologically smooth over $\k$,
and that the induced map $\k \to \mrm{H}^0(A)$ is of finite type.
Then the set
\[
W = \{\bar{\p} \in \opn{Spec}(\mrm{H}^0(A)) \mid A_{\bar{\p}} \cong \mrm{H}^0(A_{\bar{\p}})\}
\]
is a non-empty open set.
\end{cor}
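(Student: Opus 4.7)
The plan is to combine generic flatness with Theorem \ref{thm:flathz} applied after localizing at points of an appropriate non-empty open set. Since $\k$ is a noetherian integral domain and $\k\to\mrm{H}^0(A)$ is of finite type, generic flatness produces a non-empty open $U\subseteq\opn{Spec}(\mrm{H}^0(A))$ such that $\mrm{H}^0(A_{\bar{\p}})=\mrm{H}^0(A)_{\bar{\p}}$ is flat over $\k$ for every $\bar{\p}\in U$.

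For each such $\bar{\p}$, I verify the hypotheses of Theorem \ref{thm:flathz} for the DG-ring $A_{\bar{\p}}$ over $\k$. The noetherian and boundedness hypotheses descend to the localization: $\mrm{H}^0(A_{\bar{\p}})$ and $(A_{\bar{\p}})^0=A^0_{\p}$ are noetherian, each $\mrm{H}^i(A_{\bar{\p}})=\mrm{H}^i(A)_{\bar{\p}}$ is finitely generated over $\mrm{H}^0(A_{\bar{\p}})$, and $\amp(A_{\bar{\p}})\le\amp(A)<\infty$. Flatness of $\k\to\mrm{H}^0(A_{\bar{\p}})$ holds by the choice of $U$. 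For homological smoothness of $\k\to A_{\bar{\p}}$, stability of perfectness under base change along the flat (localization) map $A\otimes^{\mrm{L}}_{\k}A\to A_{\bar{\p}}\otimes^{\mrm{L}}_{\k}A_{\bar{\p}}$ yields that $A\otimes^{\mrm{L}}_{A\otimes^{\mrm{L}}_{\k}A}(A_{\bar{\p}}\otimes^{\mrm{L}}_{\k}A_{\bar{\p}})$ is perfect over $A_{\bar{\p}}\otimes^{\mrm{L}}_{\k}A_{\bar{\p}}$; this DG-module identifies with $A_{\bar{\p}}$ since the diagonal $A\otimes^{\mrm{L}}_{\k}A\to A$ carries the localizing multiplicative set on both factors to the single set $S=A^0\setminus\p$. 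Theorem \ref{thm:flathz} then gives $A_{\bar{\p}}\cong\mrm{H}^0(A_{\bar{\p}})$, so $U\subseteq W$ and in particular $W$ is non-empty.

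Openness of $W$ is a support argument: a prime $\bar{\p}$ lies in $W$ precisely when $\mrm{H}^i(A)_{\bar{\p}}=0$ for all $i<0$, so
\[
W=\opn{Spec}(\mrm{H}^0(A))\setminus\bigcup_{i<0}\opn{supp}_{\mrm{H}^0(A)}(\mrm{H}^i(A)).
\]
The union is finite since $A$ has bounded cohomology, and each support is closed since $\mrm{H}^i(A)$ is finitely generated over the noetherian ring $\mrm{H}^0(A)$. The main obstacle is verifying that homological smoothness localizes, which reduces to the identification $A\otimes^{\mrm{L}}_{A\otimes^{\mrm{L}}_{\k}A}(A_{\bar{\p}}\otimes^{\mrm{L}}_{\k}A_{\bar{\p}})\cong A_{\bar{\p}}$ above.
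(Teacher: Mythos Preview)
Your argument follows the same route as the paper's: openness of $W$ via the finite union of supports of the $\mrm{H}^i(A)$, and non-emptiness by combining generic flatness with Theorem~\ref{thm:flathz} applied to a localization $A_{\bar{\p}}$. You are in fact more explicit than the paper, which simply invokes Theorem~\ref{thm:flathz} without spelling out why homological smoothness passes to $A_{\bar{\p}}$; your base-change identification $A\otimes^{\mrm{L}}_{A\otimes^{\mrm{L}}_{\k}A}(A_{\bar{\p}}\otimes^{\mrm{L}}_{\k}A_{\bar{\p}})\cong A_{\bar{\p}}$ fills that gap correctly.
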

\begin{proof}
Let $n = -\inf(A) = \amp(A)$.
For each $k \in \mathbb{N}$,
consider the set
\[
V_k = \{\bar{\p} \in \opn{Spec}(\mrm{H}^0(A)) \mid \mrm{H}^k(A_{\bar{\p}}) \ne 0\}.
\]
The noetherian assumption on $A$ implies that $\mrm{H}^k(A)$ is a finitely generated $\mrm{H}^0(A)$-module,
so the set $V_k$, being the support of a finitely generated module, is closed.
Since 
\[
W = \opn{Spec}(\mrm{H}^0(A)) \setminus \left(V_{-1} \cup V_{-2} \cup \dots \cup V_{-n}\right),
\]
we see that $W$ is open.
On the other hand, 
denoting the map $\k \to \mrm{H}^0(A)$ by $\varphi$,
we know from \cite[tag 051R]{SP} that the set 
\[
\{\bar{\p} \in \opn{Spec}(\mrm{H}^0(A)) \mid \k_{\varphi^{-1}(\bar{\p})} \to \mrm{H}^0(A)_{\bar{\p}} \mbox{ is flat}\}
\]
is non-empty.
Hence, by Theorem \ref{thm:flathz}, the set $W$ is also non-empty.
\end{proof}

\textbf{Acknowledgments.}

The author thanks Amnon Yekutieli for helpful discussions.
The author is thankful to an anonymous referee for several corrections that helped significantly improving this manuscript.
This work has been supported by the Charles University Research Centre program No.UNCE/SCI/022,
and by the grant GA~\v{C}R 20-02760Y from the Czech Science Foundation.

\end{document}